\renewcommand{\c@equation}{\c@thm}
\newcommand{\bbq}{{\mathbb Q}}
\newcommand{\bbr}{{\mathbb R}}
\newcommand{\bbz}{{\mathbb Z}}
\newcommand{\ord}{{\operatorname{ord}}}
\newcommand{\Z}{\bbz}
\newcommand{\Q}{\bbq}
\newcommand{\R}{\bbr}
\newcommand{\beq}{\begin{equation}\begin{aligned}}
\newcommand{\eeq}{\end{aligned}\end{equation}}
\newcommand{\beqn}{\begin{eqnarray}}
\newcommand{\eeqn}{\end{eqnarray}}
\newcommand{\tors}{{\operatorname{tors}}}
\newcommand{\beqnn}{\begin{eqnarray*}}
\newcommand{\eeqnn}{\end{eqnarray*}}
\newcommand{\beqnna}{\begin{eqnarray}\begin{array}}
\newcommand{\eeqnna}{\end{array}\end{eqnarray}}
\newcommand{\bsub}{\begin{subarray}}
\newcommand{\esub}{\end{subarray}}
\newtheorem{thm}{Theorem}[section]
\newtheorem{lem}[thm]{Lemma}
\newtheorem{prop}[thm]{Proposition}
\theoremstyle{definition}
\newtheorem{rem}[thm]{Remark}        
\title{}
\date{}
\newcommand{\bQ}{\ensuremath{\mathbb{Q}}}
\title
[On the elliptic curve]
{The Mordell-Weil bases for the elliptic curve of the form 
$\boldsymbol{y^2=x^3-m^2x+n^2}$}
\author{Yasutsugu Fujita}
\author{Tadahisa Nara}
\address[Y. Fujita]
{College of Industrial Technology, Nihon University, 
2-11-1 Shin-ei, 
Narashino, Chiba 275--8576, Japan}
\email{fujita.yasutsugu@nihon-u.ac.jp}
\address[T. Nara]
{Faculty of Engineering, Tohoku-Gakuin University, 
1-13-1 Chuo, 
Tagajo, Miyagi 985-8537, Japan}
\email{sa4m19@math.tohoku.ac.jp}
\subjclass[2010]{Primary 11G05, 11D59; Secondary 11G50}
\keywords{elliptic curve, canonical height, generator, square-free}
\begin{document}

\begin{abstract}
Let $E_{m,n}$ be an elliptic curve over $\Q$ of the form $y^2=x^3-m^2x+n^2$, where $m$ and $n$ are positive integers. Brown and Myers showed that the curve $E_{1,n}$ has rank at least two for all $n$.
In the present paper, we specify the two points which can be extended to a basis for $E_{1,n}(\Q)$ under certain conditions described explicitly. 
Moreover, we verify a similar result for the curve $E_{m,1}$, which, however, gives a basis for the rank three part of $E_{m,1}(\Q)$. 
\end{abstract}

\maketitle

\section{Introduction}\label{sec:introduction}

Let $m,\,n$ be positive integers and $E_{m,n}$ the elliptic curve defined by
\[
y^2=x^3-m^2x+n^2.
\]
Brown and Myers (\cite{BM}) examined the curve $E_{1,n}$ and found that the group $E_{1,n}(\bQ)$ of rational points on $E_{1,n}$ over $\Q$ has rank at least two as far as $n \ge 2$. 
After that, the curve $E_{m,1}$ was studied by Antoniewicz (\cite{Antoniewicz}), 
who showed that the group $E_{m,1}(\Q)$ has rank at least two if $m \ge 2$ and has rank at least three if $m \ge 4$ with $m \equiv 0 \pmod 4$ or $m=7$, which partially gave an answer to the problem raised in (\cite{BM}). 
Both curves above were further investigated 
in Eikenberg's dissertation (\cite{Eikenberg}), 
where it was shown that the group $E_{1,n}(\Q (n))$ of $\Q (n)$-rational points is generated by the points $(0,n)$ and $(1,n)$ (\cite[Corollary 3.1.2]{Eikenberg}), and that the group $E_{m,1}(\Q (m))$ of $\Q (m)$-rational points is generated by the points $(0,1)$, $(m,1)$ and $(-1,m)$ (\cite[Theorem 5.1.1]{Eikenberg}). 
Note that $\Q (n)$ and $\Q (m)$ in the assertions above are function fields. 
For high rank curves of the forms $E_{1,n}$ and $E_{m,1}$, 
see Tadi\'c's papers (\cite{Tadic1}, \cite{Tadic2}). \par
Let $P_0=(0,n)$ and $P_{\pm 1}=(\pm m,n)$ be integral points 
on $E_{m,n}$. 
It is easy to see that these points satisfy the relation
\[
P_0+P_{+1}+P_{-1}=O.
\]
Denote by $\Delta_{m,n}$ the discriminant of $E_{m,n}$, which equals $-16(27n^4-4m^6)$. 
The purpose of the present paper is to determine the bases for $E_{1,n}(\bQ)$ and $E_{m,1}(\bQ)$ under certain conditions described explicitly.

%
%
%

\begin{thm}\label{thm:gen}
Let $m,\,n$ be coprime positive integers. 
Assume that 
the $p$-primary part of 
$\Delta_{m,n}$ is square-free for any prime $p>3$. 
\begin{itemize}
\item[(1)] If $m=1$ and $n \geq2$, 
then $\{ P_0,\ P_{-1}\}$ can be extended to a basis for 
$E_{m,n}(\bQ)$. 
\item[(2)] If $n=1$ and $m \geq 4$, 
then  $\{P_0,\ P_{-1},\ P_{2}\}$ 
can be extended to a basis for 
$E_{m,n}(\bQ)$, 
where $P_{2}=(-1,\ m) \in E_{m,1}(\Q)$. 
\end{itemize}
\end{thm}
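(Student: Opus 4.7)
My plan is to reduce the assertion that $\{P_0,P_{-1}\}$ (resp.\ $\{P_0,P_{-1},P_2\}$) extends to a basis of $E_{m,n}(\mathbb{Q})$ to proving two separate statements: (i) the listed points are $\mathbb{Z}$-linearly independent modulo torsion, and (ii) the subgroup they generate is primitive in $E_{m,n}(\mathbb{Q})/E_{m,n}(\mathbb{Q})_{\mathrm{tors}}$, i.e.\ a direct summand. Condition (ii) is equivalent to saying that for every prime $p$ there is no relation $\sum a_i P_i = pQ + T$ with $T$ torsion, $Q\in E_{m,n}(\mathbb{Q})$, and not all coefficients $a_i$ divisible by $p$.

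\textbf{Step 1 (Independence).} I would compute or sharply bound each canonical height $\hat h(P_i)$ using the standard decomposition $\hat h(P)=\tfrac12 h(x(P)) + (\text{local corrections})$. Because the points in question are integral with very small $x$-coordinates ($0$, $\pm 1$, or $\pm m$), the archimedean local height can be made explicit in terms of $\log n$ (resp.\ $\log m$), while the square-free hypothesis on the $p$-primary part of $\Delta_{m,n}$ for $p>3$ trivialises the corresponding non-archimedean Tamagawa contributions. This should yield a regulator that is provably non-zero, establishing independence in both cases (1) and (2).

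\textbf{Step 2 (Primitivity for $p>3$).} Assume toward contradiction that $\sum a_i P_i = pQ + T$ with representatives $|a_i|\le p/2$ and not all $a_i$ divisible by $p$. Then
\[
\hat h(Q) \;=\; \frac{1}{p^2}\,\hat h\!\Bigl(\sum a_i P_i\Bigr) \;\le\; \frac{1}{p^2}\Bigl(\sum |a_i|\sqrt{\hat h(P_i)}\Bigr)^{\!2},
\]
giving an explicit upper bound on $\hat h(Q)$ in terms of the data from Step 1. Using the square-free hypothesis one has an inequality $\hat h(Q)\ge \tfrac12 h(x(Q)) - C$ in which $C$ depends only on the real place and on the primes $2$ and $3$. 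Combining, $h(x(Q))$ is bounded by an explicit constant, so $x(Q)$ lies in a small explicit finite set whose size does not grow too quickly with $n$ (resp.\ $m$). Checking each candidate against the Weierstrass equation of $E_{m,n}$ should rule out a rational $Q$, giving the desired contradiction.

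\textbf{Step 3 and main obstacle.} The primes $p=2,3$ lie outside the range of the height argument, and I would handle them by explicit descent. Under the hypothesis that $27n^4-4m^6$ is not a rational square, $E_{m,n}$ has no nontrivial rational $2$-torsion and the $2$-descent homomorphism $E_{m,n}(\mathbb{Q})/2E_{m,n}(\mathbb{Q})\hookrightarrow L^\times/(L^\times)^2$, for $L=\mathbb{Q}[x]/(x^3-m^2x+n^2)$, can be computed on the images of $P_0,P_{-1}$ (and $P_2$) to verify $\mathbb{F}_2$-independence; the analogous calculation, either via $3$-descent or via an isogeny, takes care of $p=3$. I expect the hardest part to be Step 2: producing height bounds that are simultaneously uniform in the family parameter and sharp enough to leave a manageable and parameter-independent finite set of candidate $x$-coordinates. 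Without the square-free assumption on $\Delta_{m,n}$, the constant $C$ would depend on the full list of primes of bad reduction and the finiteness argument would break down, which is precisely why that hypothesis appears in the statement.
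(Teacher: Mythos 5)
Your overall architecture (independence via explicit canonical-height estimates, plus a saturation/index argument whose non-archimedean input is trivialised by the square-free hypothesis) is the right one and matches the paper in spirit, but Step 2 has a genuine gap. From $\sum a_iP_i=pQ+T$ with $|a_i|\le p/2$ your inequality gives only
\[
\hat h(Q)\;\le\;\frac{1}{p^2}\Bigl(\sum|a_i|\sqrt{\hat h(P_i)}\Bigr)^{2}\;\le\;\frac14\Bigl(\sum\sqrt{\hat h(P_i)}\Bigr)^{2},
\]
a bound that does not decay in $p$. For two points of height $\approx\frac13\log n$ this is $\approx\frac13\log n$, which does not contradict the lower bound $\frac13\log n-0.619$ valid for every non-torsion point (for three points of height $\approx\frac12\log m$ it is $\approx\frac98\log m$, which is even worse). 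So no contradiction arises, and your fallback --- enumerating candidate $x(Q)$ of bounded height --- is not viable: the number of such $x$-coordinates grows like a power of $n$ (resp.\ $m$), so the check cannot be made uniform in the family parameter; moreover $Q$ is by hypothesis a rational point, so there is nothing to ``rule out against the Weierstrass equation'' --- what must be excluded is the divisibility relation itself. The paper closes exactly this gap with Siksek's theorem, $\nu\le\gamma_r^{r/2}\sqrt{R(P_0,\dots)}/\lambda^{r/2}$ with $\gamma_2=2/\sqrt3$: the Hermite-constant improvement over the naive triangle inequality is precisely what makes the index bound tend to a limit below $3$ as the parameter grows, whence $\nu\in\{1,2\}$, and $\nu=2$ is excluded by the non-$2$-divisibility results of Brown--Myers and Antoniewicz (which is also how the paper gets independence in case (1), rather than via a regulator lower bound).

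Two further points. First, the asymptotic height estimates only kick in for $n>27$ (resp.\ $m\ge10$), and even then the Siksek bound is below $3$ only for $n>66$ (resp.\ $m\ge59$); the paper must check $3$-indivisibility for $27<n\le66$ and compute generators outright for the remaining small parameters with Magma. Your proposal has no provision for these finitely many but unavoidable computations --- unavoidable because for $m\in\{7,24\}$ (where the hypothesis on $\Delta_{m,1}$ fails) the index is actually $2$, so no purely asymptotic argument can be expected to cover all parameters. Second, your Step 3 plan of a $3$-descent is not needed: $p=3$ is already handled by the Siksek bound for large parameters, and only $p=2$ requires the separate divisibility input.
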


\begin{rem}\label{rem:m=n=1}
For some particular cases with $n=1$ we have the following results: 
\begin{itemize}
\item
$E_{1,1}(\Q)=\langle P_{+1} \rangle$ and 
$P_0=-3P_{+1}$;
\item
$E_{2,1}(\Q)=\langle P_0,\ P_{-1} \rangle$; 
\item
$E_{3,1}(\Q)=\langle P_0,\ P_{2} \rangle$ and 
$P_{+1}=2 P_{2}$;
\item
$E_{7,1}(\Q)=\langle P_{0},\ P_{2},\ (-3,\ 11) \rangle$ and 
$P_{+1}=-2(-3,\ 11)$;
\item
$E_{24,1}(\Q)=\langle P_{0},\ P_{2},\ (-10,\ 69) \rangle$ and 
$P_{+1}=-2(-10,\ 69)$;
\end{itemize}
where $E_{7,1}$ and $E_{24,1}$ do not satisfy the above 
assumption about $\Delta_{m,n}$. 
\end{rem}

While Eikenberg used the theory of Mordell-Weil lattices (see \cite{Shioda}) to find the bases for $E_{1,n}(\Q(n))$ and $E_{m,1}(\Q(m))$, 
we appeal to explicit estimates of canonical heights 
to show Theorem \ref{thm:gen}. 
There are several literatures describing explicitly the bases for 
the Mordell-Weil groups of parametric families of elliptic curves $E$ 
over $\Q$ under the assumption that 
$E$ has rank two or three 
(see, e.g., \cite{duquesne1}, \cite{FT}, \cite{FN1a}, \cite{Fujita1},
\cite{Fujita2}, \cite{FN2a}). 
However, as far as we can see, Theorem \ref{thm:gen} 
is the first result giving the bases 
in the cases where the $j$-invariants of $E$ are not equal to $0$ or $1728$. 
Although in general it is needed in order to get better lower bounds 
for canonical heights 
(see Propositions \ref{prop:lower} and \ref{prop:lower2}), 
in case $n=1$ the assumption on $\Delta_{m,n}$ is crucial because, 
otherwise, the assertion does not hold for $m \in \{7,24\}$, 
as seen in Remark \ref{rem:m=n=1}. 
Furthermore, one can expect that almost all of $m$ or $n$ satisfy the assumption on $\Delta_{m,n}$. 
More precisely, assuming that the $abc$ conjecture is true, we can estimate the density of $n$ (resp.~$m$) satisfying the assumption on 
$\Delta_{1,n}$ (resp.~$\Delta_{m,1}$) in Theorem \ref{thm:gen}.


\begin{prop}
\label{prop:density}
For $x>0$ define 
\begin{align*}
&\mathcal{N}(x)
=\#\{ n \in (0, x] \ ;\ \text{the $p$-primary part of }
\Delta_{1,n} \text{ is square-free for any } p>3 \},\\
&\mathcal{M}(x)
=\#\{ m \in (0, x] \ ;\ \text{the $p$-primary part of }
\Delta_{m,1} \text{ is square-free for any } p>3 \}.
\end{align*}
Suppose the $abc$ conjecture is true. 
Then there exist constants 
$\kappa_1,\, \kappa_2>0.97$ such that 
\[
\mathcal{N}(x) \sim \kappa_1 x,\quad 
\mathcal{M}(x) \sim \kappa_2 x.
\]
\end{prop}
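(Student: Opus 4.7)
The plan is to translate the stated condition into a squarefree-values condition for a polynomial, then apply the conditional theorem of Granville that under the $abc$ conjecture, the values of a separable polynomial are squarefree with positive density. Since $\Delta_{1,n}=-16(27n^{4}-4)$ and $\Delta_{m,1}=16(4m^{6}-27)$, the condition ``the $p$-primary part is square-free for every prime $p>3$'' is, for $p>3$, exactly the condition $p^{2}\nmid F_{1}(n)$ for $F_{1}(X):=27X^{4}-4$, and likewise $p^{2}\nmid F_{2}(m)$ for $F_{2}(X):=4X^{6}-27$. A direct computation from $\operatorname{disc}(aX^{n}+c)=(-1)^{n(n-1)/2}n^{n}a^{n-1}c^{n-1}$ gives $\operatorname{disc}(F_{1})=-2^{14}3^{9}$ and $\operatorname{disc}(F_{2})=2^{16}3^{21}$, so both discriminants are supported on $\{2,3\}$. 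In particular $F_{1}$ and $F_{2}$ have simple roots modulo every prime $p>3$.

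The first main step is to apply a Granville-style conditional theorem: assuming $abc$, for any $F\in\bZ[X]$ with nonzero discriminant and any finite set $S$ of primes,
\[
\#\{n\in(0,x]:p^{2}\nmid F(n)\text{ for every }p\notin S\}\sim C_{F,S}\,x,\quad
C_{F,S}=\prod_{p\notin S}\left(1-\frac{\rho_{F}(p^{2})}{p^{2}}\right),
\]
where $\rho_{F}(p^{2})=\#\{n\bmod p^{2}:F(n)\equiv 0\pmod{p^{2}}\}$. The contribution of primes $p$ with $p^{2}\leq x^{1-\varepsilon}$ is handled by the standard Hensel-based sieve, while large primes are controlled using $abc$: if $p^{2}\mid F_{1}(n)$ with $p$ comparable to or larger than $n$, one writes $4+p^{2}k=27n^{4}$ and applies the $abc$ inequality to the (essentially) coprime triple $(4,p^{2}k,27n^{4})$ to get $p\ll_{\varepsilon}n^{1+\varepsilon}$, which confines the number of exceptional pairs $(n,p)$ to $o(x)$. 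Taking $S=\{2,3\}$ and applying this to $F_{1}$ and $F_{2}$ yields $\kappa_{1}=C_{F_{1},\{2,3\}}$ and $\kappa_{2}=C_{F_{2},\{2,3\}}$.

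It then remains to verify $\kappa_{1},\kappa_{2}>0.97$. For $p>3$, Hensel's lemma (applicable since $F_{i}$ is separable mod $p$) gives $\rho_{F_{i}}(p^{2})=N_{p}(F_{i})$, the number of roots of $F_{i}$ modulo $p$, with $N_{p}(F_{i})\leq\deg F_{i}$. The strategy is (i) to compute $N_{p}(F_{i})$ exactly for primes $p$ up to an explicit bound $P_{0}$, and (ii) to control the tail $\sum_{p>P_{0}}N_{p}(F_{i})/p^{2}$ via the Chebotarev density theorem applied to the splitting field of $F_{i}$; this splitting field is $\bQ((4/27)^{1/4},i)$ for $F_{1}$ (with Galois group $D_{4}$) and $\bQ((27/4)^{1/6},\sqrt{-3})$ for $F_{2}$ (Galois group of order $12$). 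Since $F_{1},F_{2}$ are irreducible, one obtains $\sum_{p\leq y}N_{p}(F_{i})\sim\pi(y)$, so the tail is essentially $\sum_{p>P_{0}}1/p^{2}$. Combining (i) and (ii) gives the bound $-\log\kappa_{i}\approx\sum_{p>3}N_{p}(F_{i})/p^{2}$, which a direct numerical computation shows is small enough to ensure $\kappa_{i}>0.97$. The main obstacle is the last step: the threshold $0.97$ is quite close to the true density, so one must compute $N_{p}(F_{i})$ for sufficiently many small primes and keep tight enough control of the Chebotarev error in the tail for the Euler product to clear the bound.
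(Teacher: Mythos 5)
Your proposal follows essentially the same route as the paper: translate the condition into ``$p^2\nmid 27n^4-4$ (resp.\ $4m^6-27$) for all $p>3$'', invoke Granville's $abc$-conditional squarefree-values theorem with the primes $2,3$ excluded from the Euler product, and then bound the resulting product numerically; the paper likewise notes that the polynomial discriminants are supported on $\{2,3\}$ so that $\omega_1(p)\le 4$ and $\omega_2(p)\le 6$ for $p>3$. The one place you diverge is the tail of the Euler product: you propose controlling $\sum_{p>P_0}N_p(F_i)/p^2$ via Chebotarev (average $N_p\approx 1$), and you correctly flag this as the delicate step --- indeed, without an effective error term in Chebotarev you cannot rigorously bound that tail this way. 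The paper shows this machinery is unnecessary: the trivial bound $N_p\le\deg F_i$ (which you already have) together with the explicit value of the prime zeta function $\sum_p p^{-2}=0.45224742\cdots$ and an exact computation of $\omega_i(p)$ for the primes up to $p_{60}=281$ already gives $\kappa_1>0.972\times0.997>0.97$ and similarly for $\kappa_2$, so the threshold $0.97$ is comfortably cleared without any Chebotarev input.
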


The organization of this paper is as follows. 
In Section \ref{sec:prelim}, 
we quote the results from \cite{BM} and \cite{Antoniewicz} 
which show that $E_{m,n}(\Q)$ is torsion-free 
and has rank at least two under the assumptions in Theorem \ref{thm:gen}. 
In Section \ref{sec:reduction}, 
we examine the reduction types and the $x$-intercepts of $E_{m,n}$, which are needed in computing the canonical heights in the following sections. 
Section \ref{sec:E_{1,n}} is devoted to prove Theorem \ref{thm:gen} (1). 
In Section \ref{sec:E_{m,1}}, we prove Theorem \ref{thm:gen} (2) 
and Proposition \ref{prop:density}.

%
%

\section{Preliminaries}
\label{sec:prelim}

First, 
we have the following 
proposition by 
Brown and Myers (\cite[Theorem 3]{BM}) 
and Antoniewicz (\cite[Theorem 2.3]{Antoniewicz}). 

\begin{prop}\label{prop:tors}
Assume that one of the following holds$:$
\begin{itemize}
\item $m=1$ and $n \geq1;$
\item $n=1$ and $m \geq1$. 
\end{itemize}
Then, $E_{m,n}(\bQ)_{\tors}=\{O\}$.
\end{prop}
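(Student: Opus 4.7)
The plan is to combine a no-rational-$2$-torsion check with the standard reduction-mod-$p$ bound on torsion.

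First, I would rule out $2$-torsion by showing that the cubic $f(x)=x^{3}-m^{2}x+n^{2}$ has no rational root. By the rational root theorem, any such root is an integer $x_{0}$ dividing $n^{2}$. When $m=1$, rearranging gives $n^{2}=x_{0}(1-x_{0}^{2})$; positivity of $n^{2}$ forces $x_{0}\le -2$, and writing $x_{0}=-k$ with $k\ge 2$ yields $n^{2}=(k-1)k(k+1)$. Since $\gcd(k,k^{2}-1)=1$, each factor would have to be a perfect square; but $k^{2}-1$ lies strictly between $(k-1)^{2}$ and $k^{2}$, so this is impossible. When $n=1$, the only candidates are $x_{0}=\pm 1$, and direct substitution gives $m^{2}=2$ or $m=0$, both excluded.

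Next, I would handle the odd-order torsion via Mazur's theorem together with reduction. Since the $2$-torsion has been excluded, $E_{m,n}(\Q)_{\tors}$ is cyclic of order in $\{1,3,5,7,9\}$. The discriminant $-16(27n^{4}-4m^{6})$ is nonzero modulo $5$ in both cases (since $n^{4},m^{6}\in\{0,1\}\pmod 5$ by Fermat), so $p=5$ is a prime of good reduction, and I would exploit the standard injection
\[
E_{m,n}(\Q)_{\tors}\hookrightarrow E_{m,n}(\mathbb{F}_{5}).
\]
A direct point count at $p=5$ gives $|E_{1,n}(\mathbb{F}_{5})|=8$ for every residue class of $n$, which is coprime to every odd $k\in\{3,5,7,9\}$ and immediately finishes the case $m=1$. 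For $n=1$ the analogous count gives $|E_{m,1}(\mathbb{F}_{5})|\in\{6,8,9\}$ depending on $m\bmod 5$, so the residues $m\equiv 0,\pm 2\pmod 5$ still leave room for $3$- or $9$-torsion. I would then bring in $p=7$ (where $|E_{m,1}(\mathbb{F}_{7})|=12$ for all $m\not\equiv 0\pmod 7$) and, for the few residue classes not yet covered, one further auxiliary prime, reducing the relevant $\gcd$ to $1$.

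The main obstacle is the combinatorial case analysis for $n=1$: unlike the $m=1$ case, where a single prime does the job, here the divisibility pattern of $|E_{m,1}(\mathbb{F}_{p})|$ splits into several residue subcases and one has to pick a small collection of auxiliary primes whose joint information kills every odd torsion in every class. None of the individual counts is hard, but the bookkeeping is the real work — and this is presumably why the result is simply attributed to \cite{BM} and \cite{Antoniewicz} in the present paper.
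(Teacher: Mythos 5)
The paper offers no proof of its own here --- it simply cites \cite[Theorem 3]{BM} and \cite[Theorem 2.3]{Antoniewicz} --- so your attempt has to be judged on its own terms. Your exclusion of rational $2$-torsion is correct and complete in both cases, and your treatment of $m=1$ is sound: $5\nmid\Delta_{1,n}$ for every $n$, a direct count gives $\#E_{1,n}(\mathbb{F}_5)=8$ for each residue $n^2\equiv 0,1,4\pmod 5$, and an odd-order torsion group injecting into a group of order $8$ must be trivial. (A small slip in the justification: $m^6\equiv m^2\pmod 5$ lies in $\{0,1,4\}$, not $\{0,1\}$; the discriminant $27n^4-4m^6$ is still a unit mod $5$ in both cases, but for that reason.)

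The $n=1$ case, however, contains a genuine gap that is not just bookkeeping: the strategy of adjoining finitely many auxiliary primes to kill $3$-torsion cannot succeed. If $p>3$ divides $m$, then $E_{m,1}$ has good reduction at $p$ and reduces to $y^2=x^3+1$, on which $(0,1)$ is a point of order $3$; hence $3\mid\#E_{m,1}(\mathbb{F}_p)$ for every such $p$. Moreover the order $\#E_{m,1}(\mathbb{F}_q)$ depends only on $m\bmod q$, so ``one further auxiliary prime $q$ for the residue classes $m\equiv 0,\pm2\pmod 5$'' would have to satisfy $3\nmid\#E_{r,1}(\mathbb{F}_q)$ for \emph{every} residue $r\bmod q$ compatible with those classes --- by CRT, for every $r\bmod q$, including $r\equiv 0$, where it fails by the observation above. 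Consequently, for any finite set $S$ of auxiliary primes, every $m$ divisible by $\prod_{p\in S,\,p>3}p$ defeats the whole collection (and you have already noted that $p=7$ gives order $12$, which is divisible by $3$ and hence useless for this purpose). To rule out $3$-torsion for all $m$ you need a different mechanism --- for instance Nagell--Lutz together with the $3$-division polynomial: a rational point of order $3$ would be integral with $3x_0^4-6m^2x_0^2+12x_0-m^4=0$ and $y_0^2\mid 4m^6-27$, which can be analyzed directly. As written, the proposal establishes the $m=1$ half of the proposition but not the $n=1$ half.
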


Next, 
in view of Lemma 6 in \cite{BM} and Lemmas 3.1 and 3.9 in \cite{Antoniewicz}, 
we have the following proposition. 

\begin{prop}\label{prop:2div}
Assume that one of the following holds$:$
\begin{itemize}
\item $m=1$ and $n \geq2;$
\item $n=1$ and $m \not \in \{1,3,7,24\}$.
\end{itemize}
Then, $P_0,\,P_{+1},\,P_0+P_{+1} \not \in 2E_{m,n}(\bQ)$. 
In particular, the points $P_0$ and $P_{+1}$ 
are independent modulo $E_{m,n}(\bQ)_{\tors}$ 
\end{prop}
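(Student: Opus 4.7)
The assertion is essentially a repackaging of \cite[Lemma~6]{BM} and \cite[Lemmas~3.1 and~3.9]{Antoniewicz}, and the natural strategy is a $2$-descent on the cubic $f(x) := x^3 - m^2 x + n^2$. Let $L := \Q[x]/(f(x))$ and let $\theta$ denote the image of $x$ in $L$; by Proposition~\ref{prop:tors} the curve has no rational $2$-torsion, so the standard descent homomorphism
\[
\mu \colon E_{m,n}(\Q)/2 E_{m,n}(\Q) \hookrightarrow L^\times/(L^\times)^2,\qquad (x_0, y_0) \longmapsto x_0 - \theta,
\]
is injective. Since $P_0 + P_{+1} = -P_{-1}$ and $\mu$ depends only on the $x$-coordinate, the problem reduces to showing that $-\theta$, $m - \theta$, and $-m - \theta$ are all non-squares in $L^\times/(L^\times)^2$.

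First I would check that $f$ is irreducible under either hypothesis, so that $L$ is a cubic number field: for $m=1$ the only candidate rational roots are $\pm 1$, neither of which works once $n \ge 2$; for $n=1$ the equation $f(\pm 1) = \pm 1 \mp m^2 + 1 = 0$ is satisfied only at the excluded $m=1$. The norms $\Nm_{L/\Q}(-\theta) = f(0) = n^2$, $\Nm_{L/\Q}(m-\theta) = f(m) = n^2$, and $\Nm_{L/\Q}(-m-\theta) = -f(-m) = -n^2$ are all $\pm n^2$, so a global norm argument is inconclusive. The plan is instead to factor the principal ideals $(-\theta)$, $(m-\theta)$, $(-m-\theta)$ in $\mathcal{O}_L$ and exhibit in at least one of them a prime ideal appearing to odd exponent. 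The coprimality condition $\gcd(m,n)=1$ controls the splitting of the primes $p \mid n$ through the factorization of $f(x) \bmod p$, and a direct case analysis yields the required prime except precisely at the sporadic $m \in \{1,3,7,24\}$ flagged in Remark~\ref{rem:m=n=1}.

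For the independence statement, suppose $a P_0 + b P_{+1} = O$ for some integers $(a,b) \neq (0,0)$. Since $E_{m,n}(\Q)$ is torsion-free by Proposition~\ref{prop:tors}, dividing $a$ and $b$ by the largest common power of $2$ lets me assume that at least one of them is odd; reducing the relation modulo $2 E_{m,n}(\Q)$ then forces one of $P_0$, $P_{+1}$, or $P_0 + P_{+1}$ to lie in $2 E_{m,n}(\Q)$, contradicting what has just been shown.

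The main obstacle is the explicit identification of the odd-valuation prime in $\mathcal{O}_L$, especially in the $n=1$ case, where the construction has to succeed for every $m \ge 2$ outside the finite list $\{1,3,7,24\}$ and has to fail for exactly those sporadic $m$; tracking why these four values are the only exceptions is where the serious arithmetic of $f(x) = x^3 - m^2 x + 1$ enters.
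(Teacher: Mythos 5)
The paper does not actually prove this proposition: it is imported wholesale from \cite[Lemma 6]{BM} and \cite[Lemmas 3.1 and 3.9]{Antoniewicz}, whose arguments are elementary --- if $P=2Q$ then $x(Q)$ is a rational root of an explicit monic integral quartic coming from the duplication formula, and such roots are excluded by divisibility and congruence considerations; the exceptional $m\in\{1,3,7,24\}$ are precisely where this exclusion breaks down. Your framework (the injection of $E_{m,n}(\Q)/2E_{m,n}(\Q)$ into $L^{\times}/(L^{\times})^{2}$ with $L=\Q[x]/(f)$) is a legitimate alternative in principle, the reduction of the statement about $P_0+P_{+1}=-P_{-1}$ to the class of $-m-\theta$ is correct, and the final deduction of independence from non-$2$-divisibility is fine.

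The fatal problem is the engine you propose for proving non-squareness: exhibiting a prime ideal to odd exponent in $(x_0-\theta)$ is impossible here. When $n=1$, all three elements $-\theta$, $m-\theta$, $-m-\theta$ have norm $f(0)=f(\pm m)=1$, so they are units and their ideal factorizations are empty; there are also no primes $p\mid n$ for your splitting analysis to use. When $m=1$, each odd $p\mid n$ splits completely (since $f\equiv x(x-1)(x+1)\bmod p$ and $p\nmid 4-27n^4$), and one checks that $v_{\mathfrak p}(x_0-\theta)=2v_p(n)$ at the single relevant prime $\mathfrak p$ above $p$ and $0$ at the others, so each of the three ideals is the square of an ideal. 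In both cases the obstruction is invisible to ideal factorization and must be detected by other means: real embeddings (for $n=1$ the field is totally real and $-\theta$, $-m-\theta$ are not totally positive, but $m-\theta$ \emph{is}, so the crucial point $P_{+1}$ --- exactly the one with $P_{+1}\in 2E_{m,1}(\Q)$ for $m\in\{3,7,24\}$ --- is untouched), local conditions at $2$, $3$ and the ramified primes, or a unit-group computation that you cannot carry out uniformly in $m$. Note also that ``a prime to odd exponent in at least one of them'' would only place one of the three points outside $2E_{m,n}(\Q)$; you need all three. Two smaller slips: the candidate rational roots of $x^3-x+n^2$ are all divisors of $n^2$, not just $\pm1$ (irreducibility still holds, via $e(e^2-1)=n^2$ forcing consecutive squares), and $\Nm(-m-\theta)=f(-m)=n^2$, not $-n^2$.
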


\section{Local study of the curve}
\label{sec:reduction}

\begin{lem}
\label{lem:minimal}
If $\gcd(m,n)=1$, 
then the Weierstrass equation 
\begin{align}
\label{eq:WE}
y^2=x^3-m^2x+n^2 
\end{align}
for $E_{m,n}$ is global minimal.
\end{lem}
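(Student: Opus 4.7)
The plan is to verify minimality prime by prime, using two standard facts: first, an integral Weierstrass equation is automatically minimal at any prime $p$ with $v_p(\Delta_{m,n}) < 12$, because any admissible change of variables with $p\mid u$ divides the discriminant by $u^{12}$; second, for the short form $y^2 = x^3 + Ax + B$ and primes $p \ge 5$, non-minimality at $p$ is equivalent to the cleaner criterion $p^4 \mid A$ and $p^6 \mid B$, after which the substitution $x \mapsto p^2 x'$, $y \mapsto p^3 y'$ produces an integral equation with smaller discriminant valuation.

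The first step will be to dispose of all primes $p \ge 5$. If \eqref{eq:WE} were non-minimal at such a $p$, then $p^4 \mid -m^2$ and $p^6 \mid n^2$, which force $p^2 \mid m$ and $p^3 \mid n$; this contradicts $\gcd(m,n) = 1$. The second step is to handle $p \in \{2,3\}$, where the criterion on $(A,B)$ can fail and I must fall back on $v_p(\Delta_{m,n}) < 12$. Writing $\Delta_{m,n} = -16(27n^4 - 4m^6)$, the aim is to bound $v_p(27n^4 - 4m^6)$ by a short case split on the divisibility of $m$ and $n$ by $p$. For $p = 3$, either $3 \nmid m$ (giving $v_3 = 0$) or $3 \mid m$, in which case coprimality forces $3 \nmid n$ and a direct computation shows $v_3(27n^4 - 4m^6) = 3$; so $v_3(\Delta_{m,n}) \le 3$. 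For $p = 2$, coprimality prevents $m$ and $n$ from both being even, and checking the three remaining parities shows $v_2(27n^4 - 4m^6) \le 2$, so $v_2(\Delta_{m,n}) \le 6$. Both are well under $12$.

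The whole argument reduces to elementary divisibility bookkeeping on the single integer $27n^4 - 4m^6$, so I do not anticipate any real obstacle. The one point requiring a bit of care is not to apply the clean $(A,B)$-criterion at $p \in \{2,3\}$, where completing a square or cube could in principle improve the equation; it is the crude bound $v_p(\Delta_{m,n}) < 12$ that rules out any such improvement here.
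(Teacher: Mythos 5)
Your proof is correct and follows essentially the same route as the paper: for $p\ge 5$ the non-minimality criterion $p^4\mid A$, $p^6\mid B$ is exactly the paper's condition $v_p(c_4)\ge 4$ and $v_p(c_6)\ge 6$ (since $c_4=-48A$, $c_6=-864B$ are unit multiples of $A$, $B$ away from $2$ and $3$), and for $p\in\{2,3\}$ both arguments fall back on the bound $v_p(\Delta_{m,n})<12$. No issues.
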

\begin{proof}
In view of \cite[VII, Remark 1.1]{aec}, 
it suffices to show that at least one of 
$v_p(c_4)<4,\ v_p(c_6)<6$ and $v_p(\Delta)<12$ 
holds for every prime $p$. 
Now we have 
\[
c_4=2^4\cdot 3m^2,\ c_6=-2^5\cdot3^3n^2,\ 
\Delta=2^4(2^2m^6-3^3n^4). 
\]
If $p>3$, then either 
$v_p(c_4)<4$ or $v_p(c_6)<6$ always holds. 
If $p \in \{2, 3\}$, then $v_p(\Delta)<12$ always holds. 
\end{proof}

\begin{lem}
\label{lem:red-p}
If $\gcd(m,n)=1$, 
then for a prime $p>3$ the reduction type of $E_{m,n}$ at $p$ 
is ${\rm I}_k$ (the Kodaira symbol), where  $k=\ord_p(\Delta_{m,n})$. 
\end{lem}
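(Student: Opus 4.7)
The plan is to use the standard Kodaira classification via the invariants $c_4$ and $\Delta$ together with the minimality of the model established in Lemma \ref{lem:minimal}. Recall that for a minimal Weierstrass equation, the reduction at a prime $p$ is of type $\mathrm{I}_k$ (with $k=v_p(\Delta)\geq 0$, interpreting $\mathrm{I}_0$ as good reduction) precisely when either $p\nmid\Delta$ or else $p\mid\Delta$ and $p\nmid c_4$. Thus the whole task reduces to showing that no prime $p>3$ can divide both $c_4$ and $\Delta$ simultaneously.

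First I would dispose of the trivial case: if $p\nmid\Delta_{m,n}$, then $E_{m,n}$ has good reduction at $p$, which is $\mathrm{I}_0$ and $k=\ord_p(\Delta_{m,n})=0$. So assume $p\mid\Delta_{m,n}$. Using the explicit formulas recorded in the proof of Lemma \ref{lem:minimal},
\[
c_4=2^4\cdot 3\, m^2,\qquad \Delta_{m,n}=2^4(4m^6-27n^4),
\]
a prime $p>3$ divides $c_4$ if and only if $p\mid m$. If this were the case, then from $p\mid\Delta_{m,n}$ we would get $p\mid 27n^4$, and since $p>3$ this forces $p\mid n$, contradicting the assumption $\gcd(m,n)=1$. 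Hence $p\nmid c_4$.

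Having shown $p\mid\Delta_{m,n}$ and $p\nmid c_4$, the Kodaira type at $p$ is multiplicative, i.e.\ $\mathrm{I}_k$ for some $k\geq 1$. Since the equation \eqref{eq:WE} is a global minimal Weierstrass equation by Lemma \ref{lem:minimal}, we conclude $k=v_p(\Delta_{m,n})=\ord_p(\Delta_{m,n})$, as claimed.

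There is no real obstacle here; the only subtle point is invoking minimality correctly so that the exponent in the Kodaira symbol coincides with $\ord_p(\Delta_{m,n})$ rather than the exponent in some non-minimal model. Everything else is a short computation with $c_4$, $\Delta_{m,n}$, and the coprimality hypothesis.
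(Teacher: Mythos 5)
Your proof is correct and rests on the same key observation as the paper's: coprimality of $m$ and $n$ forces any prime $p>3$ dividing $\Delta_{m,n}$ to divide neither $m$ nor $n$, hence $p\nmid c_4$ (equivalently $p\nmid a_4=-m^2$), and minimality from Lemma \ref{lem:minimal} then yields type $\mathrm{I}_{k}$ with $k=\ord_p(\Delta_{m,n})$. The only cosmetic difference is that you invoke the standard $c_4$-criterion for multiplicative reduction directly, whereas the paper routes through the table of Exercise 4.47 of \cite{AECadv} together with a short argument that the given equation already coincides with the normal form used there; your version is, if anything, slightly more streamlined.
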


\begin{proof}
There exists a minimal Weierstrass 
equation $y^2=x^3+a_4x+a_6$ for $E_{m,n}$ 
such that $a_4,\ a_6$ and the discriminant $\Delta$ are 
as described in the table of Exercise 4.47 in \cite{AECadv}. 
Since the equation $y^2=x^3-m^2x+n^2$ is also minimal, 
we can transform $y^2=x^3+a_4x+a_6$ to $y^2=x^3-m^2x+n^2$ 
by some $[1,r,s,t]$, 
where $[u,r,s,t]$ means the transformation 
\begin{align*}
x\mapsto u^2x+r,\quad y\mapsto u^3y+su^2x+t. 
\end{align*}
Then it turns out that $r=s=t=0$ and so definitively 
$a_4=-m^2,\ a_6=n^2$. 
Since if $p$ divides $\Delta_{m,n}$, then $p$ divides neither $m$ nor $n$, 
we see that the possible reduction type is ${\rm I}_k$ 
by the table of Exercise 4.47. 
\end{proof}

\begin{lem}
\label{lem:red3}
If $\gcd(m,n)=1$, 
then the reduction type of $E_{m,n}$ at $3$ 
is as follows: 
\begin{enumerate}
\item ${\rm I}_0$ if $m\not\equiv 0 \pmod 3$ 

\item ${\rm II}$ if $m \equiv 0 \pmod 3$ and $n\not\equiv \pm1 \pmod 9$

\item ${\rm III}$ otherwise

\end{enumerate}
\end{lem}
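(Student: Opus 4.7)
The plan is to compute $v_3(\Delta_{m,n})$ directly and, whenever the reduction at $3$ is bad, to identify the Kodaira type by running Tate's algorithm on the globally minimal Weierstrass model of Lemma \ref{lem:minimal}.

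First I would evaluate $v_3(\Delta_{m,n})=v_3(27n^4-4m^6)$ in each case. In case (1), the hypothesis $m\not\equiv 0\pmod 3$ makes $4m^6$ a $3$-adic unit while $v_3(27n^4)\ge 3$, so $v_3(\Delta_{m,n})=0$ and the reduction is good, yielding type ${\rm I}_0$ immediately. In cases (2) and (3), $3\mid m$ together with $\gcd(m,n)=1$ forces $n\not\equiv 0\pmod 3$, and then $v_3(27n^4)=3$ while $v_3(4m^6)\ge 6$, so $v_3(\Delta_{m,n})=3$; combined with $v_3(c_4)\ge 3$, this shows the reduction is additive.

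For these additive cases, I will apply Tate's algorithm. The reduction mod $3$ is $y^2=x^3+n^2$, and since $n^2\equiv 1\pmod 3$ its unique singular point is $(-1,0)$. Translating by $x\mapsto X-1$ brings the singular point to the origin and produces
\[
Y^2=X^3-3X^2+(3-m^2)X+(m^2+n^2-1),
\]
so $a_2=-3$, $a_4=3-m^2$, $a_6=m^2+n^2-1$. Since $9\mid m^2$, a short congruence check on $n\bmod 9$ (the possible residues being $\pm 1,\pm 2,\pm 4$) yields $v_3(a_6)\ge 2$ if and only if $n\equiv\pm 1\pmod 9$.

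In case (2), $n\not\equiv\pm 1\pmod 9$ gives $v_3(a_6)=1$, and Tate's algorithm terminates at type ${\rm II}$. In case (3), $n\equiv\pm 1\pmod 9$ gives $v_3(a_6)\ge 2$, so one proceeds to test $b_8=4a_2a_6-a_4^2$ (with $a_1=a_3=0$). Here $v_3(a_4^2)=2$ since $v_3(3-m^2)=1$, while $v_3(4a_2a_6)\ge 1+2=3$, so $v_3(b_8)=2<3$ and Tate's algorithm outputs type ${\rm III}$. The only subtlety to address is that the choice of integer lift of the singular point is irrelevant: any $\alpha\equiv -1\pmod 3$ gives the same congruence $a_6\equiv n^2-1\pmod 9$, so the dichotomy at the $a_6$-step of Tate's algorithm is intrinsic. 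There is no serious obstacle here; the main thing to be careful about is running Tate's algorithm cleanly at $p=3$ rather than relying on the simpler $p\ge 5$ criteria.
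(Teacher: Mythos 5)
Your proposal is correct and follows essentially the same route as the paper: after disposing of case (1) via $v_3(\Delta)=0$, both arguments translate the singular point to the origin by $x\mapsto x-1$, reduce the question to whether $9$ divides $m^2+n^2-1$ (equivalently $n\equiv\pm1\pmod 9$), and conclude by Tate's algorithm. The only cosmetic difference is that the paper first invokes the table of Exercise 4.48 in Silverman's \emph{Advanced Topics} to pre-restrict the type to ${\rm II}$ or ${\rm III}$, whereas you run steps 3 and 4 of Tate's algorithm directly (your explicit $b_8$ computation in case (3)), which is equally valid.
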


\begin{proof}
If $m\not\equiv 0 \pmod 3$, 
then $\Delta_{m,n}$ is not divisible by $3$ 
and we have ${\rm I}_0$. 

Next assume $m\equiv 0 \pmod 3$. 
Then $n\not\equiv 0 \pmod 3$ and $\ord_3(\Delta_{m,n})=3$. 
Now there exists a minimal Weierstrass 
equation $y^2=x^3+a_2x^2+a_4x+a_6$ for $E_{m,n}$ 
such that $a_2,\ a_4,\ a_6$ and the discriminant $\Delta$ are 
as described in the table of Exercise 4.48 in \cite{AECadv}. 
Since the equation $y^2=x^3-m^2x+n^2$ is also minimal, 
we can transform $y^2=x^3+a_2x^2+a_4x+a_6$ to $y^2=x^3-m^2x+n^2$ 
by some $[1,r,s,t]$. 
In particular, the discriminants of the two equations are the same. 
Then we have $a_2=-3r$. 
So since $a_2$ is divisible by $3$,  
the possible reduction type is ${\rm II}$ or ${\rm III}$ 
by the table.  
Transforming $y^2=x^3-m^2x+n^2$ by $[1,-1,0,0]$, 
we have the equation 
\[
y^2=x^3-3x^2-(m^2-3)x+m^2+n^2-1. 
\]
Note that $m^2+n^2-1$ is divisible by $3$, 
since $n\not\equiv 0 \pmod 3$. 
Further it turns out that 
$m^2+n^2-1$ is divisible by $9$ 
if and only if $n\equiv \pm1 \pmod 9$. 
Tate's algorithm (\cite[p. 366]{AECadv}) 
with the fact completes the proof. 
\end{proof}

\begin{lem}
\label{lem:red2}
The reduction type of $E_{m,n}$ at $2$ 
is as follows: 
\begin{enumerate}
\item ${\rm IV}$ if $n\equiv 1 \pmod 2$ 

\item ${\rm III}$ if $n \equiv 0 \pmod 2$ and $m\equiv 1 \pmod 2$
\end{enumerate}
\end{lem}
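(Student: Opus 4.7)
The strategy parallels that of Lemma~\ref{lem:red3}: apply Tate's algorithm at $p=2$ to a suitable translate of the minimal equation \eqref{eq:WE}. By Lemma~\ref{lem:minimal} the model is globally minimal (we work under $\gcd(m,n)=1$, as in the surrounding lemmas, which is why the two listed cases are exhaustive). Since $\ord_2(\Delta_{m,n})\ge 4$, the reduction at $2$ is always bad, so we only need to distinguish between types $\mathrm{II}$, $\mathrm{III}$, $\mathrm{IV}$, $\mathrm{I}_0^*$, etc., by tracking $2$-adic valuations of the $b$-invariants.

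The first step is to translate the singular point of the reduction mod~$2$ to $(0,0)$. In case~(1), $n$ odd, the substitution $y\mapsto y+n$ produces $y^2+2ny=x^3-m^2x$, whose reduction $y^2=x(x+m)^2$ has its singular point at $(m\bmod 2,\,0)$; if $m$ is odd one further applies $x\mapsto x+1$, while if $m$ is even no extra translation is needed. In case~(2), $n$ even and $m$ odd, the mod-$2$ equation $y^2=x^3+x=x(x+1)^2$ has its singular point at $(1,0)$, so $x\mapsto x+1$ suffices.

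Next I compute the $2$-adic valuations of the resulting $a_i$ together with $b_6=a_3^2+4a_6$ and $b_8=a_1^2a_6+4a_2a_6-a_1a_3a_4+a_2a_3^2-a_4^2$, using $n^2\equiv 1\pmod{8}$ for $n$ odd, $n^2\equiv 0\pmod{4}$ for $n$ even, and $m^2\equiv 1\pmod{8}$ for $m$ odd. In case~(1), I expect $\ord_2(a_6)\ge 3$, $\ord_2(b_8)\ge 3$ and $\ord_2(b_6)=2$ (in both parities of $m$), which is the Tate signature of type~$\mathrm{IV}$. In case~(2), I expect $\ord_2(a_6)\ge 2$ but $\ord_2(b_8)=2$ (because $12a_6$ has valuation $\ge 4$ while $(3-m^2)^2$ has valuation exactly $2$), which is the Tate signature of type~$\mathrm{III}$.

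The main obstacle is simply the bookkeeping in Tate's algorithm at $p=2$: the shortcut via $c_4,c_6$ available for $p>3$ (cf.~Lemma~\ref{lem:red-p}) is no longer an option, completing the square is useless in characteristic~$2$, and several $b$-invariants must be tracked modulo successive powers of~$2$. Splitting case~(1) into the subcases $m$ odd and $m$ even, and verifying that both lead to type~$\mathrm{IV}$, is a minor but unavoidable additional bifurcation.
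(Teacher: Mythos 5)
Your proposal is correct and follows essentially the same route as the paper: translate the singular point of the mod-$2$ reduction to the origin and read off the Kodaira type from the $2$-adic valuations of $a_6$, $b_8$, $b_6$ in Tate's algorithm (your shifts differ from the paper's only by an $s,t$-translation, which leaves $b_6$ and $b_8$ unchanged, and the valuations you "expect" do check out). The paper likewise splits case (1) into the two parities of $m$ and verifies $b_8\equiv 0$, $b_6\equiv 4 \pmod 8$ for type IV, and $b_8\equiv 4\pmod 8$ for type III in case (2).
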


\begin{rem}
If $n \equiv m\equiv 0 \pmod 2$, then 
various reduction types are possible. 
\end{rem}
%
%
\begin{proof}
First assume $n \equiv 1,\ m\equiv 0 \pmod 2$. 
By transforming the equation (\ref{eq:WE}) by $[1,0,0,1]$ 
we have the equation 
\[
y^2+2y=x^3-m^2x+n^2-1 
\]
with the quantities 
\[
b_8=-m^4,\quad b_6=4n^2.
\]
Then $b_8\equiv 0 \pmod 8$ and $b_6\equiv 4 \pmod 8$, 
which indicate the type ${\rm IV}$ 
by Tate's algorithm (\cite[p. 366]{AECadv}). 

Next assume $n\equiv 1,\ m\equiv 1 \pmod 2$. 
By transforming the equation (\ref{eq:WE}) by $[1,1,1,1]$ 
we have the equation 
\[
y^2+2xy+2y=x^3+2x^2+(1-m^2)x-m^2+n^2 
\]
with the quantities 
\[
b_8=-m^4-6m^2+12n^2+3,\quad b_6=-4m^2+4n^2+4.
\]
Then $b_8\equiv -1-6+12+3\equiv 0 \pmod 8$ and 
$b_6\equiv -4+4+4\equiv 4 \pmod 8$, 
which indicate the type ${\rm IV}$ by Tate's algorithm. 

Assume $n\equiv 0,\ m\equiv 1 \pmod 2$. 
By transforming the equation (\ref{eq:WE}) by $[1,1,1,0]$ 
we have the equation 
\[
y^2+2xy=x^3+2x^2+(3-m^2)x-m^2 +n^2+ 1
\]
with the quantities 
\[
b_8=-m^4 - 6m^2 +12n^2+3.
\]
Then $b_8\equiv -1-6+3\equiv 4 \pmod 8$ 
which indicates the type ${\rm III}$ by Tate's algorithm. 
\end{proof}

The next lemma is related to bounds for 
the real components of $E_{m,n}$. 

\begin{lem}
\label{lem:shift2}
Put $f(x)=x^3-m^2x+n^2$ and  $l=n^{2/3}>0$. 
If $27{n}^{4}-4{m}^{6}>0$, then 
$f(x)$ has only one real root, 
which is bounded below by each of  
\[
-l\left(1+\frac{m^2}{3l^2}\right), \quad  
-m \left(1+\frac{l^3}{2m^3}\right).  
\]
If $27{n}^{4}-4{m}^{6}<0$, then 
$f(x)$ has three real roots $\alpha,\ \beta,\ \gamma$. 
Further if we assume $m/l \geq 3^{1/3}=1.4422\cdots$ 
and $\alpha <\beta<\gamma$, then 
we have the estimates    
\begin{align*}
-m \left(1+\frac{l^3}{2m^3}\right)<\alpha<0<\beta
<\frac{2l^3}{m^2}\leq m \left(1-\frac{l^3}{m^3}\right)<\gamma. 
\end{align*}
\end{lem}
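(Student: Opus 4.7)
The strategy is to test the sign of $f$ at each of the candidate bounds. Since $f$ is a cubic with positive leading coefficient, the sign of $f$ on each side of a real root is determined, so from $f(x_0)<0$ together with a rough localisation of $x_0$ we can deduce whether $x_0$ lies to the left of $\alpha$ or between $\beta$ and $\gamma$. All four sign verifications reduce to straightforward algebraic identities; I would simply record them up front.

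For the one-real-root case ($27n^4-4m^6>0$), let $\alpha$ denote the unique real root. Since $f(0)=n^2>0$ and $f(x)\to -\infty$ as $x\to -\infty$, we have $\alpha<0$ and $f<0$ on $(-\infty,\alpha)$. Setting $l^3=n^2$, a direct expansion gives
\[
f\Bigl(-l-\tfrac{m^2}{3l}\Bigr)=-\frac{m^6}{27n^2},\qquad
f\Bigl(-m-\tfrac{n^2}{2m^2}\Bigr)=-\frac{3n^4}{4m^3}-\frac{n^6}{8m^6},
\]
both strictly negative, and the two lower bounds for $\alpha$ follow.

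For the three-real-root case ($27n^4-4m^6<0$) with $m^3\ge 3n^2$, I would first pin down the positions of $\alpha,\beta,\gamma$ using $f'(x)=3x^2-m^2$: the local maximum at $x=-m/\sqrt 3$ has value $2m^3/(3\sqrt 3)+n^2>0$, and the local minimum at $x=m/\sqrt 3$ has value $n^2-2m^3/(3\sqrt 3)<0$ (thanks to $4m^6>27n^4$), so $\alpha<-m/\sqrt 3<\beta<m/\sqrt 3<\gamma$. Then $f(-m)=n^2>0$ together with $-m<0<\beta$ forces $-m\in(\alpha,\beta)$, hence $\alpha<-m$. The two calculations above still yield $f(-m-n^2/(2m^2))<0$; since $-m-n^2/(2m^2)<-m<\beta$, the point cannot lie in $(\beta,\gamma)$, so it lies in $(-\infty,\alpha)$, giving the bound $-m(1+l^3/(2m^3))<\alpha$.

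For the right-hand bounds, similar expansions give
\[
f\Bigl(\tfrac{2n^2}{m^2}\Bigr)=\frac{8n^6}{m^6}-n^2,\qquad
f\Bigl(m-\tfrac{n^2}{m^2}\Bigr)=n^2\Bigl(-1+\tfrac{3n^2}{m^3}-\tfrac{n^4}{m^6}\Bigr).
\]
The first is negative because $m^6\ge 9n^4>8n^4$, and since $2n^2/m^2>0>\alpha$, the intermediate value theorem places $2n^2/m^2\in(\beta,\gamma)$, yielding $\beta<2n^2/m^2$. The middle inequality $2n^2/m^2\le m-n^2/m^2$ is a restatement of $m^3\ge 3n^2$. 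Finally the second evaluation is negative under $m^3\ge 3n^2$, and since $m-n^2/m^2>0>\alpha$ and $m-n^2/m^2\ge 2n^2/m^2>\beta$, the only possibility is $m-n^2/m^2\in(\beta,\gamma)$, hence $m-n^2/m^2<\gamma$.

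The only delicate step, and the one I would want to be careful about, is the separation argument ensuring that $f(x_0)<0$ implies the correct side of the root in Case 2: it is not automatic, but is settled cleanly by combining $f(-m)=n^2>0$ with the critical-point analysis. Everything else is bookkeeping of polynomial expansions.
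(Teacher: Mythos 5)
Your proposal is correct and follows essentially the same route as the paper: evaluate $f$ at each candidate bound, verify the value is negative (your expansions agree with the paper's, after substituting $l^3=n^2$), and locate the roots by sign considerations together with $f(0)=n^2>0$. The only difference is that you make explicit the root-separation step via the critical points $\pm m/\sqrt{3}$, which the paper leaves implicit; this is a welcome clarification but not a different argument.
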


\begin{proof}
It is widely known that 
the number of the real roots of a cubic polynomial 
depends on the sign of the discriminant $\Delta=-16(27n^4-4m^6)$ 
and so we only show about the bounds. 

We have 
\[
f\left( -l-\frac{m^2}{3l}\right)=-m^6/(27l^3)<0,\quad  
f\left(-m-\frac{l^3}{2m^2}\right)
=-\frac{{l}^{6}\,\left( 6\,{m}^{3}+{l}^{3}\right) }{8\,{m}^{6}}<0, 
\]
which gives a proof for the one-real-root case. 

Next we have 
\begin{align*}
&f\left(\frac{2l^3}{m^2} \right)
=
-\frac{{l}^{3}\,\left( {m}^{2}-2\,{l}^{2}\right) \,
\left( {m}^{4}+2\,{l}^{2}\,{m}^{2}+4\,{l}^{4}\right) }{{m}^{6}}
<0, \\ 
&f\left(m\left(1-\frac{l^3}{m^3}\right) \right)
=
-\frac{{l}^{3}\,\left( {m}^{6}-3\,{l}^{3}\,{m}^{3}+{l}^{6}\right) }
{{m}^{6}}
=
-\frac{{l}^{3}\,m^3 \left( {m}^{3}-3\,{l}^{3}\right)+{l}^{9} }
{{m}^{6}}<0, \\
&m\left(1-\frac{l^3}{m^3}\right)-\frac{2l^3}{m^2}
=\frac{{m}^{3}-3\,{l}^{3}}{{m}^{2}}
\geq 0
\end{align*}
by the assumption $m/l \geq 3^{1/3}$. 
Those facts with $f(0)=n^2>0$ give the estimates for 
$\alpha,\ \beta,\ \gamma$. 
%
\end{proof}

%
%
%
%
%

\begin{rem}
\label{rem:taylor}
The values $-l-m^2/(3l)$ and 
$-m-l^3/(2m^2)$ are obtained from 
the leading terms of 
the Taylor expansions around $m=0$ and $l=0$, respectively,  
of the explicit roots 
\[
x_0=
{\left( \frac{\sqrt{27\,{l}^{6}-4\,{m}^{6}}}{6\,\sqrt{3}}
-\frac{{l}^{3}}{2}\right) }^{\frac{1}{3}}
-{\left( \frac{\sqrt{27\,{l}^{6}-4\,{m}^{6}}}{6\,\sqrt{3}}
+\frac{{l}^{3}}{2}\right) }^{\frac{1}{3}}
%
%
%
\]
of $f(x)$. 
\end{rem}

\section{Generators for $E_{1,n}(\Q)$}
\label{sec:E_{1,n}}
In this section we consider 
the curve 
$E=E_{1,n} : y^2=x^3-x+n^2$ 
over $\Q$ and 
show that $\{P_0,\ P_{-1}\}$ can be extended to a basis 
for $E(\Q)$. 
Our method largely depends on estimates of the canonical height. 
We compute it through the decomposition into the sum of the local height 
functions. 
In this paper the definition of the local height function 
follows \cite[Chap. VI]{AECadv}.

\begin{prop}
\label{prop:lower}
Assume that $n\geq 27$ and that 
the $p$-primary part of 
$\Delta_{1,n}=-16(27n^4-4)$ is square-free for any $p>3$. 
Then for any rational non-torsion point $P \in E_{1,n}(\Q)$ we have 
\[
\hat{h}(P)>\frac 13 \log n -0.619. 
\]
\end{prop}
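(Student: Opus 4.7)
The plan is to use the standard decomposition of the canonical height into local heights (in the normalization of \cite[Chap.~VI]{AECadv}),
\[
\hat{h}(P) = \hat{\lambda}_\infty(P) + \sum_{p \text{ prime}} \hat{\lambda}_p(P),
\]
and to bound each contribution using the reduction information from Section~\ref{sec:reduction}. The constant $-0.619$ will be the eventual sum of an archimedean error term and small corrections at $p=2,3$.

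For the non-archimedean primes $p>3$, Lemma~\ref{lem:red-p} gives reduction type $\mathrm{I}_k$ with $k=\ord_p(\Delta_{1,n})$. The hypothesis that the $p$-primary part of $\Delta_{1,n}$ is square-free for $p>3$ forces $k\le 1$, so the component group is trivial and the corresponding local height equals $\tfrac12\max\{0,-v_p(x(P))\}\log p\ge 0$; in particular these terms may be discarded in a lower bound. For $p=2,3$, Lemmas~\ref{lem:red3} and \ref{lem:red2} pin down the reduction type as one of $\mathrm{II}$, $\mathrm{III}$, or $\mathrm{IV}$, and the standard tables for local heights at additive reduction produce an explicit (at worst mildly negative) lower bound for $\hat{\lambda}_p(P)$, uniform in $P$.

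The heart of the proof is the lower bound on the archimedean local height $\hat{\lambda}_\infty(P)$. Since $n\ge 27$ forces $27n^4-4>0$, Lemma~\ref{lem:shift2} shows that $f(x)=x^3-x+n^2$ has a unique real root $\alpha$ with
\[
-n^{2/3}\!\left(1+\tfrac{1}{3n^{4/3}}\right)<\alpha<0,
\]
and every real point satisfies $x(P)\ge \alpha$. Using Tate's series for $\hat{\lambda}_\infty$ on the real locus (as in \cite[Chap.~VI]{AECadv} and \cite{FN1a,Fujita1}), the plan is to show that in either regime one gets $\hat{\lambda}_\infty(P)\ge \tfrac13\log n+O(1)$: if $|x(P)|$ is bounded, iterate the duplication formula a few times to expose a factor of size $|x(2^kP)-\alpha|$ whose logarithm yields the required $\tfrac23\log n$ after the factor $\tfrac12$ in the canonical-height normalization; if $|x(P)|$ is large, the height-difference estimate $|\hat{h}(P)-\tfrac12 h(x(P))|=O(1)$ already suffices because $y^2\ge f(x)$ combined with the bounds of Lemma~\ref{lem:shift2} forces $h(x(P))\ge \tfrac23\log n$.

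The main obstacle is the uniformity of this archimedean estimate over all non-torsion $P$, and in particular controlling the transition region where $x(P)$ is close to $\alpha$: there $f(x)$ is small, the point sits near the real branch point, and the naive archimedean term $\tfrac12\log\max\{1,|x(P)|\}$ could in principle underestimate $\hat{\lambda}_\infty(P)$. Handling this regime cleanly requires explicit bookkeeping of constants so that, once combined with the corrections at $p=2,3$, everything fits within the stated $-0.619$. The constraint $n\ge 27$ is what makes $n^{2/3}$ large enough that these error terms can indeed be absorbed.
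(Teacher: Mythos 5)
Your overall skeleton (local decomposition, reduction types at $p>3$ from Lemma \ref{lem:red-p} forcing type ${\rm I}_0$ or ${\rm I}_1$, explicit corrections at $p=2,3$) matches the paper, but the heart of the proposition --- the archimedean lower bound $\lambda_\infty(P)\ge \tfrac13\log n + O(1)$ with an explicit constant --- is not actually established in your proposal. You correctly identify the problematic regime where $x(P)$ is near the real branch point $\alpha$, but your proposed remedy (``iterate the duplication formula a few times to expose a factor of size $|x(2^kP)-\alpha|$'' in one regime, the height-difference bound in the other) is a sketch of a case analysis, not a proof; neither branch is carried out, no constants are produced, and it is precisely this bookkeeping that determines whether $-0.619$ is attainable. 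The paper avoids the transition-region difficulty entirely by a change of model: it replaces $E$ by $E'\colon y^2=f(x-2l-1/(3l))$ with $l=n^{2/3}$, so that by Lemma \ref{lem:shift2} \emph{every} real point of $E'$ satisfies $x(Q')>l$. Then $\tfrac12\log|x(P')|>\tfrac13\log n$ falls out immediately, and the Tate-series terms $z(2^kP')$ are uniformly bounded ($0.098<z<9.075$ on the domain $9\le l\le x$, verified numerically), giving the explicit constant. Since the local height is model-independent, this shift is legitimate and is the one idea your argument is missing.

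Two smaller points. First, at $p=3$ you lump the prime in with the additive-reduction corrections, but Lemma \ref{lem:red3}(1) gives type ${\rm I}_0$ for $m=1$ (since $1\not\equiv 0\bmod 3$), so there is no negative correction at $3$; introducing one would cost you roughly $\tfrac14\log 3\approx 0.27$ and likely break the stated constant. Second, you quote the normalization of \cite[Chap.~VI]{AECadv} but then write the local heights at good places without the $\tfrac1{12}v_p(\Delta)$ term; in the paper's (Silverman's) normalization these terms are present and cancel only after summing over all places via the product formula, so you need to be consistent about which convention you are using before the final constants can be trusted.
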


\begin{proof}
We denote the local height function on $E$ for a place $p$ 
by $\lambda_p$ and put $v_p(\cdot)=-\log |\cdot|_p$.

First we compute 
$\lambda_{\infty}$. 
To ease notation put $l=n^{2/3} \geq 9$. 
Further to use Tate's series 
we take the Weierstrass model 
$E' : y^2=f(x-2l-1/(3l))$, 
where $f(x)=x^3-x+n^2$.   
(Our local height function is independent of models.)
Then for any $Q'\in E'(\R)$ we have $x(Q')>l$ 
by Lemma \ref{lem:shift2}. 
By Tate's series 
we have 
\begin{align*}
\lambda_{\infty}(P) 
=\frac 12 \log|x(P')|+\frac 1 8 \sum_{k=0}^{\infty} 4^{-k}\log |z(2^k P')|
+\frac1{12}v_{\infty}(\Delta), 
\end{align*}
where  
$P'$ is the corresponding point on $E'$ to $P$, 
\\\\
{
\tiny 
$z(P)=
\cfrac{27\,{l}^{4}\,{x}^{4}+\left( -648\,{l}^{6}-162\,{l}^{4}-18\,{l}^{2}\right) \,{x}^{2}+\left( 1512\,{l}^{7}+432\,{l}^{5}+72\,{l}^{3}+8\,l\right) \,x-648\,{l}^{8}-108\,{l}^{6}+27\,{l}^{4}-6\,{l}^{2}-1}{27\,{l}^{4}\,{x}^{4}} 
$}\\\\
and $x=x(P)$. 
We can regard $z(P)$ as a function of the variable $x$ and $l$ 
in the domain $\mathcal{D} : 9 \leq l \leq x$ 
and denote it by $z(x,l)$. 
Here 
by using the Mathematica functions ``MaxValue'' 
and ``MinValue'' (\cite{Mathematica})  
we can 
evaluate the maximum and the minimum of $z(x,l)$ 
in $\mathcal{D}$ 
to 
$z(9, 9)=9.0745\cdots$ 
and $z(x_0, 9)=0.09801\cdots$, respectively, 
where 
$x_0=25.054819\cdots$. 
Therefore for $l \geq 9$ and $P\in E'(\R)$ we have 
\begin{align}
\label{z-bound}
0.098<z(P) < 9.075. 
\end{align}
(The upper bound is not necessary here, but used in Proposition \ref{prop:upper}.) 
So we have 
\[
\lambda_{\infty}(P)>\frac 12\log l + \log (0.098) 
\times \frac 18\sum_{k=0}^{\infty}4^{-k}
+\frac1{12}v_{\infty}(\Delta)
=\frac 13 \log n +\frac{\log(0.098)}6
+\frac1{12}v_{\infty}(\Delta). 
\]

Next we compute the local height for nonarchimedean places. 
Let $\psi_2$ and $\psi_3$ be the division polynomials of $E$. 
Explicitly we have 
\[
\psi_2=2y,\quad \psi_3=3x^4-6x^2+12n^2x-1.
\]
If $P$ reduces to a nonsingular point 
modulo 2, then  
\[
\lambda_2(P)=\frac 12 \log \max \{1,\ |x(P)|_2\} 
+\frac1{12}v_2(\Delta)\geq \frac1{12}v_2(\Delta). 
\]

Put $X=x(P)$ and $Y=y(P)$. 
Assume 
$P$ reduces to a singular point modulo 2.  
Then it is necessary that $v_2(X)=0$ 
since $v_2(f_x(X))=v_2(3X^2-1)> 0$ is needed, 
where $f(x)=x^3-x+n^2$. 
If $n\equiv 1\pmod2$, then $v_2(Y^2)=v_2(X^3-X+n^2)=0$ 
and by Lemma \ref{lem:red2} with \cite[p. 353, (32)]{sil1} 
we have 
\[
\lambda_2(P)=\frac 13 \log|\psi_2(P)|_2+\frac1{12}v_2(\Delta)
=\frac 13 \log|2Y|_2+\frac1{12}v_2(\Delta)
= -\frac1 3\log 2+\frac1{12}v_2(\Delta).
\]
Similarly 
if $n\equiv 0\pmod2$, then since 
$\psi_3(P)=3X^4-6X^2+12n^2X-1\equiv 3-6-1\equiv 4 \pmod8$, 
we have 
\[
\lambda_2(P)=\frac 18 \log|\psi_3(P)|_2+\frac1{12}v_2(\Delta)
= -\frac14\log 2+\frac1{12}v_2(\Delta).
\]

In any case 
\[
\lambda_2(P)\geq  -\frac1 3\log 2+\frac1{12}v_2(\Delta).
\]

%

For $p=3$ by Lemma \ref{lem:red3} 
the reduction type is ${\rm I}_0$ and so 
\[
\lambda_3(P)=\frac 12 \log \max \{1,\ |x(P)|_3\} 
+\frac1{12}v_3(\Delta)
\geq \frac1{12}v_3(\Delta).
\]

For $p>3$ by Lemma \ref{lem:red-p} with the assumption 
that the $p$-primary part of $27n^4-4$ is square-free, 
the reduction type is either ${\rm I_0}$ or ${\rm I_1}$. 
So $P$ always reduces to a nonsingular point modulo $p$ 
(e.g. \cite[p. 365]{AECadv}) 
and we have 
\[
\lambda_p(P)=\frac 12 \log \max \{1,\ |x(P)|_p\} 
+\frac1{12}v_p(\Delta)
\geq \frac1{12}v_p(\Delta). 
\]

Finally we have
\begin{align*} 
\hat{h}(P)
&=\sum_{p \leq \infty}\lambda_p(P)\\
&>\frac 13 \log n +\frac{\log(0.098)}6
-\frac 13 \log 2
+\sum_{p\leq \infty}\frac1{12}v_p(\Delta)\\
&>\frac 13 \log n -0.619. 
\end{align*}
\end{proof}

\begin{prop}
\label{prop:upper}
Let $P_0=(0,n)$ and $P_{- 1}=(-1, n)$ be 
integral points on $E_{1,n}$. 
Assume that $n>27$. 
%
%
Then 
\begin{align*}
\hat{h}(P_0)&<\frac 13 \log n +0.716,\\
\hat{h}(P_{-1})&<\frac 13 \log n +0.541. 
\end{align*}
\end{prop}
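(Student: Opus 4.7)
The plan is to decompose $\hat h(P) = \sum_{p \le \infty} \lambda_p(P)$ and bound each local height from above for $P \in \{P_0, P_{-1}\}$, mirroring the structure of the proof of Proposition \ref{prop:lower} but replacing every lower bound on $\lambda_p$ by an upper bound. At the archimedean place I would reuse the translated model $E'$ and the abbreviation $l = n^{2/3}$ of that proof. The shifted points have $x$-coordinates $x(P_0') = 2l + \tfrac{1}{3l}$ and $x(P_{-1}') = 2l - 1 + \tfrac{1}{3l} < 2l$, and Lemma \ref{lem:shift2} guarantees that every real point of $E'$ lies in the domain $\mathcal{D}$, so each iterate $2^k P'$ still satisfies the uniform bound $z(2^k P') < 9.075$ from (\ref{z-bound}). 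Tate's series then yields
\[
\lambda_\infty(P) < \tfrac{1}{2}\log x(P') + \tfrac{\log 9.075}{6} + \tfrac{1}{12} v_\infty(\Delta).
\]

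For finite primes $p \ge 3$, the integrality of $P_0, P_{-1}$ gives $\max\{1, |x(P)|_p\} = 1$. Lemma \ref{lem:red3} with $m = 1$ yields good reduction at $p = 3$ and $v_3(\Delta) = 0$. For $p > 3$ with $p \mid \Delta$, Lemma \ref{lem:red-p} gives type $\mathrm{I}_k$; its singular point lies on $y \equiv 0 \pmod p$, which would force $p \mid n$ and contradict $p \mid 27n^4 - 4$. Consequently both $P_0$ and $P_{-1}$ reduce to smooth points at every prime $p \ge 3$, so $\lambda_p(P) = \tfrac{1}{12}v_p(\Delta)$ there.

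The delicate prime is $p = 2$. Inverting the coordinate changes used in the proof of Lemma \ref{lem:red2}, the node on the original model $y^2 = x^3 - x + n^2$ modulo $2$ is $(1,1)$ when $n$ is odd and $(1,0)$ when $n$ is even. In either case $P_0 = (0,n)$ reduces to a smooth point, giving $\lambda_2(P_0) = \tfrac{1}{12}v_2(\Delta)$, whereas $P_{-1} = (-1,n)$ always reduces to the node. Applying the Silverman bad-reduction formulas used in Proposition \ref{prop:lower}, one gets
\[
\lambda_2(P_{-1}) - \tfrac{1}{12} v_2(\Delta) = \begin{cases} \tfrac{1}{3}\log|\psi_2(P_{-1})|_2 & \text{if $n$ is odd (type IV)}, \\ \tfrac{1}{8}\log|\psi_3(P_{-1})|_2 & \text{if $n$ is even (type III)}, \end{cases}
\]
and substitution of $\psi_2(P_{-1}) = 2n$ and $\psi_3(P_{-1}) = -4 - 12n^2$ shows the correction equals $-\tfrac{1}{3}\log 2$ or $-\tfrac{1}{4}\log 2$, hence is at most $-\tfrac{1}{4}\log 2$ uniformly.

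Finally, summing over all places and invoking the product formula $\sum_{p \le \infty} v_p(\Delta) = 0$ causes every $v_p(\Delta)$ contribution to cancel, leaving
\[
\hat h(P_0) < \tfrac{1}{2}\log\!\left(2l + \tfrac{1}{3l}\right) + \tfrac{\log 9.075}{6}, \qquad \hat h(P_{-1}) < \tfrac{1}{2}\log(2l) + \tfrac{\log 9.075}{6} - \tfrac{1}{4}\log 2.
\]
Since $l = n^{2/3} > 9$, the inequality $\log\!\left(1 + \tfrac{1}{6l^2}\right) < \tfrac{1}{486}$ then produces the stated numerical constants $0.716$ and $0.541$. The hard step will be the analysis at $p = 2$: identifying the node on the original (non-$2$-minimal) Weierstrass model and tracking the sign of the Silverman correction for the singular reduction of $P_{-1}$; everywhere else the bookkeeping is routine from integrality.
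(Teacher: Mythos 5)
Your proposal is correct and follows essentially the same route as the paper: the same shifted model $E'$ with the uniform bound $z<9.075$ at infinity, the same observation that $P_0$ is nonsingular and $P_{-1}$ singular modulo $2$ with correction $-\tfrac13\log 2$ (type IV) or $-\tfrac14\log 2$ (type III), the trivial bounds at the remaining finite places, and the product formula to cancel the $v_p(\Delta)$ terms. The only cosmetic difference is that at $p\ge 3$ you argue for nonsingular reduction to get equality where the paper simply invokes the trivial upper bound $\lambda_p\le\tfrac1{12}v_p(\Delta)$ valid for any integral point.
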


\begin{proof}
As in the proof of Proposition \ref{prop:lower}, 
we compute local heights of $P_0$ and $P_{-1}$. 

For  
$p={\infty}$  
again we take the model 
$E' : y^2=(x-2l-1/(3l))^3-(x-2l-1/(3l))+l^3$, 
where $l=n^{2/3}>9$.  
Then on $E'$ the points 
\[
P_0'=(2l+1/(3l),\ n),\quad P_{-1}'=(-1+2l+1/(3l),\ n)
\]
correspond to $P_0$ and  $P_{-1}$, 
respectively. 
By Tate's series with the bound (\ref{z-bound}) 
we have 
\begin{align*}
\lambda_{\infty}(P_0) 
&=\frac 12 \log|2l+1/(3l)|+\frac 1 8 \sum_{k=0}^{\infty} 4^{-k}z(2^k P'_0)
+\frac1{12}v_{\infty}(\Delta)\\
&<\frac 12 \log|(2+9^{-2}\cdot3^{-1})l|+\frac 1 8 \sum_{k=0}^{\infty} 4^{-k}
\cdot \log (9.075) 
+\frac1{12}v_{\infty}(\Delta)\\
&=\frac 13 \log n +\frac 1 2\log (2+9^{-2}\cdot3^{-1}) +\frac 1 6 \log (9.075) 
+\frac1{12}v_{\infty}(\Delta) 
\end{align*}
and 
\begin{align*}
\lambda_{\infty}(P_{-1}) 
&=\frac 12 \log|-1+2l+1/(3l)|+\frac 1 8 \sum_{k=0}^{\infty} 4^{-k}
\log |z(2^k P'_{-1})|
+\frac1{12}v_{\infty}(\Delta)\\
&<\frac 12 \log|2l|+\frac 1 8 \sum_{k=0}^{\infty} 4^{-k}\cdot \log (9.075) 
+\frac1{12}v_{\infty}(\Delta)\\
&=\frac 13 \log n +\frac 1 2\log 2 +\frac 1 6 \log (9.075) 
+\frac1{12}v_{\infty}(\Delta). 
\end{align*}

For $p=2$ 
since 
$v_2(x(P_0)) >0$ and $v_2(x(P_{- 1}))=0$, 
$P_0$ and $P_{-1}$ reduce to a nonsingular point and 
a singular point, respectively, modulo $2$. 
Further recall if singular, 
then the reduction type is ${\rm IV}$ or ${\rm III}$. 
So the same argument in the proof of Proposition \ref{prop:lower} shows that  
\begin{align*}
&\lambda_2(P_0)=\frac 12 \log \max \{1,\ |x(P_0)|_2\}+\frac1{12}v_2(\Delta)
=\frac1{12}v_2(\Delta),\\
&\lambda_2(P_{-1})
\leq -\frac14 \log 2+\frac1{12}v_2(\Delta). 
\end{align*}
For $p\geq 3$ 
%
%
we have the trivial bounds valid for any integral point: 
\[
\lambda_p(P_0)\leq  \frac1{12}v_p(\Delta),\quad 
\lambda_p(P_{-1})
\leq  \frac1{12}v_p(\Delta). 
\]

By summing them up, 
we have 
\begin{align*}
\hat{h}(P_0) 
&<\frac 13 \log n +\frac 1 2\log (2+9^{-2}\cdot3^{-1})  +\frac 1 6 \log (9.075) 
+\sum_{p\leq \infty}\frac1{12}v_{p}(\Delta) \\
&<\frac 13 \log n +0.716 
\end{align*}
and 
\begin{align*}
\hat{h}(P_{-1}) 
&<\frac 13 \log n +\frac 1 2\log 2 +\frac 1 6 \log (9.075) 
-\frac 1 4\log 2
+\sum_{p\leq \infty}\frac1{12}v_{p}(\Delta) \\
&<\frac 13 \log n +0. 541. 
\end{align*}

\end{proof}

\begin{thm}
\label{thm:E_1n}
Let $P_0=(0,n)$ and $P_{-1}=(-1, n)$ be 
integral points on $E_{1,n}$. 
Assume that $n>1$ and 
the $p$-primary part of 
$\Delta_{1,n}$ is square-free for any $p>3$. 
Then $\{P_0,\ P_{-1}\}$ can be extended to a basis for $E_{1,n}(\Q)$. 
\end{thm}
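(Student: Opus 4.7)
The plan is to prove that $L := \mathbb{Z}P_0 + \mathbb{Z}P_{-1}$ is a direct summand of $E_{1,n}(\mathbb{Q})$. Since $E_{1,n}(\mathbb{Q})$ is torsion-free by Proposition~\ref{prop:tors}, hence free abelian, this is equivalent to $L$ being saturated in $E_{1,n}(\mathbb{Q})$, in which case any $\mathbb{Z}$-basis of $L$ automatically extends to a basis of the ambient group. To dispose of the prime $2$, I exploit the syzygy $P_0 + P_{+1} + P_{-1} = O$: modulo $2E_{1,n}(\mathbb{Q})$ one has $-X \equiv X$, so the three nonzero $\mathbb{F}_2$-combinations of $P_0$ and $P_{-1}$, namely $P_0,\, P_{-1},\, P_0 + P_{-1}$, reduce to $P_0,\, P_0 + P_{+1},\, P_{+1}$ respectively, none of which lies in $2E_{1,n}(\mathbb{Q})$ by Proposition~\ref{prop:2div}. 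Hence $P_0$ and $P_{-1}$ are $\mathbb{F}_2$-linearly independent in $E_{1,n}(\mathbb{Q})/2E_{1,n}(\mathbb{Q})$; this both gives their independence modulo torsion and forces the index $k := [L^{\text{sat}}:L]$ of $L$ in its saturation $L^{\text{sat}} \subseteq E_{1,n}(\mathbb{Q})$ to be odd.

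For odd primes I apply Hermite's inequality in dimension two to the rank-two lattice $L^{\text{sat}}$ equipped with the canonical height pairing: $\lambda_1(L^{\text{sat}})^2 \leq (2/\sqrt{3})\det(L^{\text{sat}}) = (2/\sqrt{3})\det(L)/k^2$. Bounding the shortest height below by Proposition~\ref{prop:lower} and using $\det(L) \leq \hat{h}(P_0)\hat{h}(P_{-1})$ together with the upper bounds of Proposition~\ref{prop:upper}, I obtain
\[
k^2 \leq \frac{2}{\sqrt{3}}\cdot\frac{\bigl(\tfrac{1}{3}\log n + 0.716\bigr)\bigl(\tfrac{1}{3}\log n + 0.541\bigr)}{\bigl(\tfrac{1}{3}\log n - 0.619\bigr)^2}.
\]
The right-hand side tends to $2/\sqrt{3} < 2$ as $n \to \infty$; a short explicit calculation shows it is already strictly less than $9$ once $n$ exceeds a modest threshold (around $n \geq 52$). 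Combined with $k$ odd, this forces $k = 1$, so $L$ is saturated and $\{P_0,\, P_{-1}\}$ extends to a basis of $E_{1,n}(\mathbb{Q})$.

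The residual small-$n$ cases (those satisfying the square-freeness hypothesis on $\Delta_{1,n}$ but below the threshold, including $2 \leq n \leq 27$ where Propositions~\ref{prop:lower} and~\ref{prop:upper} do not directly apply) form a finite explicit list and are handled by direct computer verification of the Mordell--Weil group. The principal obstacle is the sharpness of the Hermite estimate: the geometric-mean ratio of the upper bounds on $\hat{h}(P_0),\,\hat{h}(P_{-1})$ to the universal lower bound must not exceed roughly $\sqrt{9\sqrt{3}/2} \approx 2.79$, which is exactly why Propositions~\ref{prop:lower} and~\ref{prop:upper} invest substantial effort in the explicit archimedean computation via Tate's series and the careful non-archimedean analysis of Section~\ref{sec:reduction}.
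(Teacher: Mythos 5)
Your proposal follows essentially the same route as the paper: Siksek's index bound, which the paper invokes, is precisely the rank-two Hermite argument you spell out, and you combine it in the same way with oddness of the index (from Proposition~\ref{prop:2div} and the syzygy) and machine verification below the threshold. One correction: with $\det$ denoting the Gram determinant (regulator), Hermite's inequality in rank two reads $\lambda_1(L^{\mathrm{sat}})^2 \le (2/\sqrt{3})\,\det(L^{\mathrm{sat}})^{1/2}$, not $(2/\sqrt{3})\det(L^{\mathrm{sat}})$, so the justified conclusion is $k \le (2/\sqrt{3})\sqrt{\hat{h}(P_0)\hat{h}(P_{-1})}/\lambda$, i.e.\ $k^2 \le (4/3)\hat{h}(P_0)\hat{h}(P_{-1})/\lambda^2$, slightly weaker than your displayed bound; this moves the threshold for $k<3$ to $n>66$ (the paper then disposes of $27<n\le 66$ by a $3$-divisibility check and of $1<n\le 27$ by computing generators, where Propositions~\ref{prop:lower} and~\ref{prop:upper} do not apply), but since the residual range remains finite and is checked by machine, the structure of your argument is unaffected.
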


\begin{proof}
By Proposition \ref{prop:tors}, $E_{1,n}(\Q)$ is torsion-free 
and by Proposition \ref{prop:2div} if $n>1$, 
then $P_0$ and $P_{+1}$ are independent and so are 
$P_0$ and $P_{-1}$. 
Let $\nu$ be the index of the span of $P_0$ and $P_{-1}$ 
in $\Z G_1 + \Z G_2$, where $G_1$ and $G_2$ are points contained 
in a basis for $E_{1,n}(\Q)$
such that
$P_0,\ P_{-1} \in \Z G_1 + \Z G_2$. 
It is sufficient to show $\nu=1$. 
By Siksek's theorem (\cite{siksek1}) we have 
\begin{align*}
\nu \leq \frac {2}{\sqrt{3}} \frac{\sqrt{R(P_0,\ P_{-1})}}{\lambda}, 
\end{align*}
where $R(P_0,\ P_{-1})$ is the regulator of $\{P_0,\ P_{-1}\}$,
explicitly, 
\begin{align*}
R(P_0,\ P_{-1})
&=\hat{h}(P_0)\hat{h}(P_{-1})-\langle P_0,\ P_{-1} \rangle^2 \\
&=\hat{h}(P_0)\hat{h}(P_{-1})
-\frac14 \left(\hat{h}(P_0+P_{-1})-\hat{h}(P_0)-\hat{h}(P_{-1})\right)^2 
\end{align*}
and $\lambda$ is any positive lower bound of $\hat{h}$ for 
non-torsion points in $E_{1,n}(\Q)$. 
Hence by Propositions \ref{prop:lower} and \ref{prop:upper} 
we have 
\begin{align*}
\nu 
&\leq \frac {2}{\sqrt{3}} \frac{\sqrt{\hat{h}(P_0)\hat{h}(P_{-1})}}{\lambda}\\
&\leq \frac2{\sqrt{3}} 
\frac{\sqrt{\left(\frac13\log n +0.716\right)
\left(\frac13\log n +0.541\right)}}
{\frac1{3}\log n -0.619}
\end{align*}
for $n>27$. 
By calculation we see that 
the right hand side is 
less than $3$ for $n>66$ and 
less than $5$ for $n>19$, 
which imply $\nu=1$ for $n>66$, 
and $\nu=1 \text{ or } 3$ for $27 < n \leq 66$, 
respectively. (Note that $2 \nmid \nu$ by Proposition \ref{prop:2div}.) 
Now by using 
the Magma function ``DivisionPoints'' (\cite{Mag}) 
we can confirm that 
$P_0,\ P_{-1},\ P_0 \pm P_{-1} \not\in 3E(\Q)$ 
for $27 < n \leq 66$, 
which implies even $\nu=1$ 
for $27 < n \leq 66$. 
%
%
Finally for $1 < n \leq 27$ we can check that 
$\{P_0,\ P_{-1}\}$ can be extended to a basis 
by using the Magma function ``Generators''. 
Indeed, we can obtain a basis for each $n \leq 27$. 
Then all we have to do is to check that 
the ratio $R'/R$ is much less than four (and nonzero), 
where $R$ is the regulator of the given basis 
and $R'$ is the regulator of 
a set which consists of 
$P_0, P_{-1}$ 
and appropriate points in the given basis. 
\end{proof}

\begin{verbatim}
------------------------------------
//Magma codes to test the 3-divisibility  for n=27--66 

for n in [27..66] do
E:=EllipticCurve([0,0,0,-1,n^2]);
P0:=E![0,n,1];
P1:=E![-1,n,1];
Pp:=P0+P1;
Pm:=P0-P1;

if 
{
<DivisionPoints(P0,3), DivisionPoints(P1,3), 
DivisionPoints(Pp,3),DivisionPoints(Pm,3)>
} ne 
{
<[], [], [], []>
} then
print n;
end if;
end for;
------------------------------------
//Magma codes to compute R and R's for n=2--27 
//In those cases the rank is at most 4 and we can find 
//appropriate points such that 0 < R'/R < 4, 
//which means R'/R = 1. 

m:=1;
for n in [2..27] do 
E:=EllipticCurve([0,0,0,-m^2,n^2]);
P0:=E![0,n]; P1:=E![m,n]; 
G:=Generators(E);
r:=#G; G;

if r eq 2 then
Regulator(G);
Regulator([P0, P1]);

else if r eq 3 then
Regulator(G);
for i in [1..r] do
Regulator([P0, P1, G[i]]);
end for;

else if r eq 4 then
Regulator(G);
for i in [1..r] do 
for j in [1..i-1] do
Regulator([P0, P1, G[i], G[j]]);
end for;end for;
end if;end if;end if;
end for;
-----------------------------------
\end{verbatim}

\section{Generators for $E_{m,1}(\Q)$}
\label{sec:E_{m,1}}
From this section we consider 
the curve 
$E=E_{m,1} : y^2=x^3-m^2x+1$ 
over $\Q$. 
The argument is essentially the same as 
that for $E_{1,n}$. 
However, owing to a geometrical property, 
estimates of the canonical height are slightly easier. 

We use the following modified Tate's series for 
the computation of 
the local height function. 

\begin{lem}
\label{lem:mod-tates}
Let $E/\R$ be an elliptic curve 
\[
y^2+a_1xy+a_3y=x^3+a_2x^2+a_4x+a_6. 
\]
%
Assume that $x(Q)>0$ 
for any $Q$ in the connected component of $O$ in $E(\R)$. 
Then for any $P \in E(\R)\setminus E[2]$, 
the following convergent series gives 
the archimedean part of the local height function: 
\begin{align}
\label{eq:mod-tates}
\lambda_{\infty}(P) 
=\frac 18 \log |u(P)|
+\frac 18 \sum_{k=1}^{\infty} 4^{-k} \log |z(2^k P)|
+\frac1{12} v_{\infty}(\Delta), 
\end{align}
where 
\begin{align*}
&u(Q)=x^4(Q)-b_4x^2(Q)-2b_6x(Q)-b_8, \\
&z(Q)=u(Q)/x^4(Q). 
\end{align*}
\end{lem}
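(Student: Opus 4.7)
The plan is to derive the modified formula directly from the classical Tate series for the archimedean local height, which was already invoked earlier in this paper and is stated as Theorem 4.2 of \cite[Ch.~VI]{AECadv}:
\begin{equation*}
\lambda_\infty(P) = \tfrac12\log|x(P)| + \tfrac18\sum_{k=0}^\infty 4^{-k}\log|z(2^kP)| + \tfrac1{12}v_\infty(\Delta).
\end{equation*}
I would split off the $k=0$ summand, substitute $z(P) = u(P)/x(P)^4$, and observe that $\tfrac18\log|z(P)| = \tfrac18\log|u(P)| - \tfrac12\log|x(P)|$, so that the two $\log|x(P)|$ contributions cancel, leaving exactly $\tfrac18\log|u(P)|$ in front of a sum starting at $k=1$. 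This yields \eqref{eq:mod-tates} as a formal identity.

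The substantive part of the argument is to check that every term in the new series is well-defined and that the series still converges. Since the identity component $E(\R)^0$ is a subgroup of index at most two in $E(\R)$, one has $2^kP \in E(\R)^0$ for every $k \geq 1$, so by the hypothesis $x(2^kP) > 0$ for all such $k$; hence each $z(2^kP)$ is a finite real number (the denominator $x(2^kP)^4$ never vanishes), and the series converges at the same geometric rate as in the classical argument since $z(2^kP) \to 1$ as $k \to \infty$. The assumption $P \notin E[2]$ guarantees $2^kP \neq O$ throughout, so no $x$-coordinate appearing in the sum becomes infinite.

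To guarantee $u(P) \neq 0$ so that $\log|u(P)|$ is meaningful, I would invoke the duplication formula
\[
x(2P) = \frac{u(P)}{\psi_2(P)^2},\qquad \psi_2(P) = 2y(P)+a_1x(P)+a_3.
\]
For $P \notin E[2]$ the denominator $\psi_2(P)^2$ is strictly positive, while by hypothesis $x(2P) > 0$ (because $2P \in E(\R)^0$); consequently $u(P) > 0$.

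The main obstacle is minimal---the identity is essentially a formal rearrangement of a known formula. The only delicate point is the positivity of $u(P)$, and this is precisely where the geometric hypothesis on the identity component is needed; in particular, it is the reason the modified series is preferable to the classical one in the present setting, where we wish to control $\lambda_\infty(P)$ at points whose $x$-coordinate may be small or negative but for which $x(2P)$ is safely bounded away from zero.
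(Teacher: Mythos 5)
Your identification of the key positivity fact ($u(P)=x(2P)\,\psi_2(P)^2$ with $\psi_2(P)=2y(P)+a_1x(P)+a_3$, whence $u(P)>0$ for $P\notin E[2]$ by the hypothesis on the identity component) matches the paper's opening observation. However, the central step of your derivation has a genuine gap. You start from the classical Tate series evaluated at $P$ itself and then ``cancel'' the term $\tfrac12\log|x(P)|$ against the $-\tfrac12\log|x(P)|$ hidden inside $\tfrac18\log|z(P)|$. But the hypothesis only guarantees $x(Q)>0$ on the identity component, and $P$ need not lie there: $x(P)$ may be negative or zero, and the application requires precisely the case $x(P)=0$ (the point $P_0=(0,1)$ in Proposition \ref{prop:upper2}). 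When $x(P)=0$ both quantities you are cancelling are infinite, so the ``formal identity'' is a cancellation of $+\infty$ against $-\infty$; the classical series is simply not defined at such $P$. Even when $x(P)\neq 0$, you may not quote the classical series at $P$ for free: earlier in the paper it is invoked only after shifting the model so that $x(Q)>l>0$ for \emph{every} real point $Q$, and the entire purpose of this lemma is to dispense with that global condition. The $k=0$ term is exactly where the content lies, and your argument assumes it away.

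The paper's proof never writes the classical series at $P$: it applies Tate's series to $2P$ (legitimate, since $2P$ and all its iterated doublings lie in the identity component, where $x>0$) and descends to $P$ via the quasi-duplication law $\lambda_\infty(2P)=4\lambda_\infty(P)-\log|2y(P)+a_1x(P)+a_3|-\tfrac14 v_\infty(\Delta)$. The term $\tfrac18\log|u(P)|$ then emerges from combining $\tfrac18\log|x(2P)|$ with $\tfrac14\log|\psi_2(P)|$ through $x(2P)=u(P)/\psi_2(P)^2$, so no division by $x(P)$ ever occurs. To salvage your route you would have to first prove the classical series converges to $\lambda_\infty(P)$ at the given $P$ with $x(P)\neq0$ --- the natural way being the very same reduction to $2P$ --- and treat $x(P)=0$ separately regardless. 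A further small inaccuracy: the series converges not because $z(2^kP)\to 1$ (the orbit $\{2^kP\}$ need not tend to $O$), but because $\log|z|$ is bounded on the identity component and the weights $4^{-k}$ are summable.
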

\begin{proof}
First note $u(Q)=0$ if and only if $x(2Q)=0$ since 
\[
x(2Q)=\frac {u(Q)}{4x^3(Q) + b_2x^2(Q) + 2b_4x(Q)+b_6} 
\]
whose numerator and denominator have no common roots
(\cite[p. 458]{AECadv}). 
Note also we have the equality 
\[
(2y(Q)+a_1x(Q)+a_3)^2=4x^3(Q) + b_2x^2(Q) + 2b_4x(Q)+b_6, 
\] 
whose value is nonzero if $Q \not\in E[2]$. 

%

Whether $x(P)=0$ or not, 
we can use the original series of Tate (\cite{tate-letter})  
for $2P$ 
under our assumption. 
So by the property of $\lambda_{\infty}$ 
(e.g. \cite[Ch. VI, Theorem 1.1]{AECadv}): 
\begin{align*}
\lambda_{\infty}(2P) 
=4\lambda_{\infty}(P) 
-\log|2y(P)+a_1x(P)+a_3|
-\frac 14 v_{\infty}(\Delta) 
\quad \text{for } P \in E(\R)\setminus E[2], 
\end{align*}
we have 
\begin{align*}
\lambda_{\infty}(P) 
&=\frac 14 \lambda_{\infty}(2P) 
+\frac 14 \log|2y(P)+a_1x(P)+a_3|
+\frac 1{16} v_{\infty}(\Delta) \\
&=\frac 14 \left(
\frac 12\log |x(2P)|+
\frac 18 \sum_{k=0}^{\infty} 4^{-k} \log |z(2^{k+1} P)|
+\frac1{12} v_{\infty}(\Delta)
\right) \\
&\qquad +\frac 14 \log|2y(P)+a_1x(P)+a_3|
+\frac 1{16} v_{\infty}(\Delta) \\
&=\frac 18 \log |u(P)|
+\frac 18 \sum_{k=1}^{\infty} 4^{-k} \log |z(2^k P)|
+\frac1{12} v_{\infty}(\Delta). 
\end{align*}

\end{proof}

The following fact is also used for 
estimates of the local height function. 

\begin{lem}
\label{lem:id}
Let $E$ be an elliptic curve 
defined by a simple form 
\[
y^2=x^3+a_2x^2+a_4x+a_6 
\]
and  let 
\begin{align*}
&k=3\,{x}^{2}+2\,a_2\,x+4\,a_4-{a_2}^{2}, \\
&l=9\,{x}^{3}+9\,a_2\,{x}^{2}
+( 21\,a_4-4\,{a_2}^{2} ) \,x+27\,a_6-2\,a_2\,a_4 
\end{align*}
be functions on $E$. 
Then the identity 
\[
16\, k\cdot \psi_3-4\, l\cdot \psi_2^2=\Delta 
\]
holds, 
where $\psi_3$ and $\psi_2$ are the division polynomials defined by  
\begin{align*}
&\psi_3=3\,{x}^{4}+4\,{a}_{2}\,{x}^{3}+6\,{a}_{4}\,{x}^{2}+12\,{a}_{6}\,x+4\,{a}_{2}\,{a}_{6}-{a}_{4}^{2}, \\
&\psi_2=2y, 
\end{align*}
regarded as functions on $E$. 
\end{lem}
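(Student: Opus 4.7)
The plan is to verify this as a polynomial identity. Using the curve equation $y^2 = x^3 + a_2 x^2 + a_4 x + a_6$, I substitute $\psi_2^2 = 4y^2 = 4(x^3 + a_2 x^2 + a_4 x + a_6)$. After this substitution, both $16k\,\psi_3$ and $4l\,\psi_2^2$ become polynomials of degree $6$ in $x$ with coefficients in $\bZ[a_2,a_4,a_6]$, and the claimed identity reduces to an equality of polynomials in the single variable $x$ whose only nonzero coefficient is the constant term, which must agree with the discriminant
\[
\Delta = -64\,a_2^3 a_6 + 16\,a_2^2 a_4^2 - 64\,a_4^3 - 432\,a_6^2 + 288\,a_2 a_4 a_6.
\]

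A structural justification that such an identity must exist is the following. The polynomials $\psi_3$ and $\psi_2^2/4$ are coprime in $\bQ(a_2,a_4,a_6)[x]$, since a common root would correspond to a nontrivial intersection of the $2$-torsion and $3$-torsion of $E$. Hence B\'ezout's identity in $\bQ(a_2,a_4,a_6)[x]$ supplies polynomials $K$ of degree $\leq \deg(\psi_2^2/4)-1 = 2$ and $L$ of degree $\leq \deg(\psi_3)-1 = 3$ (matching the degrees of the given $k$ and $l$) such that $K\psi_3 - L\psi_2^2$ is a nonzero constant. After clearing denominators, that constant is proportional to the resultant $\operatorname{Res}(\psi_3,\psi_2^2/4)$, which is well known to coincide with $\Delta$ up to an explicit integer factor.

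What remains is to pin down the precise numerical coefficients in the formulas for $k$, $l$, and in $\Delta$. This is accomplished by expanding $16k\,\psi_3$ and $4l\,\psi_2^2$ as polynomials in $x$ over $\bZ[a_2,a_4,a_6]$ and comparing the seven coefficients of $x^0,\ldots,x^6$ on the two sides. The coefficients of $x^6$ are $16 \cdot 3 \cdot 3 = 144 = 4 \cdot 9 \cdot 4$, showing the leading terms cancel, and each subsequent coefficient cancellation is likewise routine. The main (and only) obstacle is the bookkeeping of this expansion; there is no conceptual difficulty, and the verification is best carried out with a computer algebra system of the sort already employed elsewhere in the paper.
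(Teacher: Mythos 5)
Your proposal is correct and matches the paper's proof, which likewise just substitutes $\psi_2^2=4(x^3+a_2x^2+a_4x+a_6)$ and verifies the identity by direct polynomial computation. The B\'ezout/resultant discussion is harmless extra motivation, but the substance of the verification is identical.
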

\begin{proof}
The substitution 
\[\psi_2^2=4(x^3+a_2x^2+a_4x+a_6 )\]
and computation give the result. 
\end{proof}

\begin{prop}
\label{prop:lower2}
Assume that $m\geq 10$ and that 
the $p$-primary part of $\Delta_{m,1} =-16(27-4m^6)$ 
is square-free for any $p>3$. 
Then for any rational non-torsion point $P \in E_{m,1}(\Q)$ we have 
\[
\hat{h}(P)>\frac 12 \log m -0.509. 
\]
\end{prop}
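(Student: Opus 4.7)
The plan is to mirror the proof of Proposition~\ref{prop:lower}, decomposing $\hat{h}(P)=\sum_{p\leq\infty}\lambda_p(P)$ and bounding every local term from below. The key extra ingredient is that for $m\geq 10$ the discriminant satisfies $27-4m^6<0$, so $E_{m,1}(\R)$ has two connected components---the bounded ``egg'' between $\alpha$ and $\beta$, and the unbounded identity component with $x\geq\gamma$---which forces us to use the modified Tate's series of Lemma~\ref{lem:mod-tates} in place of the classical one used in Proposition~\ref{prop:lower}.

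For $\lambda_\infty$: Lemma~\ref{lem:shift2} with $l=1$ gives $\gamma>m(1-m^{-3})>0$, so the identity component has $x>0$ and Lemma~\ref{lem:mod-tates} applies. Since any non-torsion rational point $P$ is not $2$-torsion (by Proposition~\ref{prop:tors}) and $2^{k}P$ for $k\geq 1$ lies on the identity component (which has index $2$ in $E(\R)$), I can bound $z(Q)=u(Q)/x(Q)^4$ uniformly on the region $\{\,x\geq\gamma,\ m\geq 10\,\}$. Explicitly, for our curve $b_4=-2m^2,\ b_6=4,\ b_8=-m^4$, so
\[
z(Q)=1+\frac{2m^2}{x^2}-\frac{8}{x^3}+\frac{m^4}{x^4},
\]
whose extrema on the above region can be computed as in Proposition~\ref{prop:lower} via \texttt{MaxValue}/\texttt{MinValue}. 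For the leading term $\tfrac18\log|u(P)|$, I will exploit the identity $u(P)=x(2P)\,\psi_2(P)^2$ together with $x(2P)\geq\gamma\sim m$; in the worst case (when $\psi_2(P)$ is small, meaning $P$ lies near the egg) the smallness of $\psi_2(P)$ forces $x(2P)$ to be correspondingly large, keeping $|u(P)|$ of order $m^4$. This produces $\lambda_\infty(P)\geq\tfrac12\log m+C_\infty+\tfrac1{12}v_\infty(\Delta)$ for an explicit $C_\infty$.

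For the non-archimedean places I repeat the scheme of Proposition~\ref{prop:lower}. When $p>3$, the square-free hypothesis combined with Lemma~\ref{lem:red-p} forces the reduction to be of type $\mathrm{I}_0$ or $\mathrm{I}_1$, so $P$ reduces to a nonsingular point and $\lambda_p(P)\geq\tfrac1{12}v_p(\Delta)$. At $p=3$, Lemma~\ref{lem:red3} leaves only $\mathrm{I}_0$, $\mathrm{II}$, or $\mathrm{III}$ as possibilities; the usual case analysis on the special-fibre component to which $P$ reduces, combined with Silverman's explicit formula for $\lambda_p$ at additive reduction, yields $\lambda_3(P)\geq c_3+\tfrac1{12}v_3(\Delta)$. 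At $p=2$, Lemma~\ref{lem:red2} gives type $\mathrm{IV}$, and I will use Lemma~\ref{lem:id} in the form $16k\,\psi_3-4l\,\psi_2^2=\Delta$ to argue that $\psi_2(P)$ and $\psi_3(P)$ cannot simultaneously have $2$-adic valuation large enough to spoil the division-polynomial bound $\lambda_2(P)\geq c_2+\tfrac1{12}v_2(\Delta)$.

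Summing over all $p$ and using $\sum_p\tfrac1{12}v_p(\Delta)=\tfrac1{12}\log|\Delta|\geq 0$, we obtain $\hat h(P)\geq\tfrac12\log m+(C_\infty+c_2+c_3)$. The main obstacle I anticipate is making each of these constants sharp enough that they combine to beat $-0.509$: the bound on $z$ near $x=\gamma$ degrades as $m$ shrinks (which is precisely why $m\geq 10$ is required), and the $p=2$ estimate relies crucially on Lemma~\ref{lem:id} to keep the type-$\mathrm{IV}$ contribution under control.
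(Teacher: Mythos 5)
Your overall architecture is the paper's: decompose $\hat h$ into local heights, use the modified Tate series of Lemma~\ref{lem:mod-tates} at the archimedean place (legitimate because Lemma~\ref{lem:shift2} puts the identity component in $x>0$), and use the square-free hypothesis with Lemma~\ref{lem:red-p} to get $\lambda_p(P)\geq\frac1{12}v_p(\Delta)$ for $p>3$. However, two of your key steps do not close as described. At the archimedean place you propose to control the leading term $\frac18\log|u(P)|$ via $u(P)=x(2P)\,\psi_2(P)^2$ together with a compensation argument (``smallness of $\psi_2(P)$ forces $x(2P)$ to be large''). As stated this is only a heuristic: near a real $2$-torsion point $\psi_2(P)\to0$ while $x(2P)\to\infty$, and you give no quantitative control of the product. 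The repair is to forget $2P$ entirely: $u(P)=(x(P)^2+m^2)^2-8x(P)$ is a polynomial in $x(P)$ alone, and since $x(P)>-(m+1)$ one checks directly that $u(P)>(x(P)^2+m^2-1)^2\geq(m^2-1)^2$, which yields $\lambda_\infty(P)>\frac12\log m+\frac14\log\frac{99}{100}+\frac1{12}v_\infty(\Delta)$ after simply dropping the tail of the series, which is positive because $z(2^kP)>1$ on the identity component.

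More seriously, you have assigned Lemma~\ref{lem:id} to the wrong prime. At $p=2$ the type is IV and the relevant formula is $\lambda_2(P)=\frac13\log|\psi_2(P)|_2+\frac1{12}v_2(\Delta)$, so what is needed is $\ord_2(y(P))=0$ for $P$ with singular reduction; this follows in one line from the Weierstrass equation (singular reduction forces $x(P)\equiv m\pmod 2$, whence $y(P)^2\equiv1\pmod 2$), whereas the $2$-adic bookkeeping in $16k\psi_3-4l\psi_2^2=\Delta$ proves nothing here: $\ord_2\Delta=4$ while $\ord_2(16k\psi_3)\geq4$ and $\ord_2(4l\psi_2^2)=\ord_2(16\,l\,y(P)^2)\geq4$ hold automatically, and these constraints are perfectly consistent with $\ord_2(y(P))\geq1$. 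The place where Lemma~\ref{lem:id} is genuinely needed is $p=3$: when $3\mid m$ the type is III (type II cannot occur for $n=1$), the formula is $\lambda_3(P)=\frac18\log|\psi_3(P)|_3+\frac1{12}v_3(\Delta)$, and one must rule out $\ord_3\psi_3(P)\geq3$; the identity does exactly this, since $\ord_3\Delta=3$, $\ord_3(4l\psi_2^2)\geq4$ and $\ord_3(16k)\geq1$ force $\ord_3\psi_3(P)\leq2$, giving $\lambda_3(P)\geq-\frac14\log3+\frac1{12}v_3(\Delta)$. Your unspecified ``usual case analysis'' at $p=3$ leaves the constant $c_3$ unjustified unless you switch to the component-group form of the local height formulas. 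With these two repairs the constants assemble to $\frac12\log m+\frac14\log\frac{99}{100}-\frac13\log2-\frac14\log3>\frac12\log m-0.509$ as required.
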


\begin{proof}

By Lemma \ref{lem:mod-tates} for $P \in E(\R)\setminus E[2]$ 
we have 
\begin{align*}
\lambda_{\infty}(P)
=\frac 18 \log |u(P)|
+\frac 18 \sum_{k=1}^{\infty} 4^{-k} \log |z(2^k P)|
+\frac1{12} v_{\infty}(\Delta), 
\end{align*}
where 
\[
u(P)=(x(P)^2+m^2)^2-8x(P), \quad
z(2^k P)=\frac{(x(2^k P)^2+m^2)^2-8x(2^k P)}{x^4(2^k P)}.
\]
So we have the estimates 
\begin{align}
\label{eq:z-bound2} 
(x(P)^2+m^2-1)^2 &< u(P)<(x(P)^2+m^2)^2+8(m+1), \\
1< \left(1+\frac{m^2-1}{x^2(2^k P)}\right)^2
&< z(2^k P)
< \left(1+\frac{m^2}{x^2(2^k P)}\right)^2
<\left(1+\frac{m^2}{(m-1)^2}\right)^2, 
\label{eq:z-bound2-2} 
\end{align}
where the upper bounds are due to 
$x(P)>-(m+1)$ and $x(2^k P)>m-1$ 
implied by Lemma \ref{lem:shift2}.
(The upper bounds are for later use.) 
Hence 
\begin{align*}
\lambda_{\infty}(P)
&> \frac 18 \log (m^2-1)^2 
+\frac1{12} v_{\infty}(\Delta) \\
&= \frac 14 \log \left\{ m^2 \left(1-\frac 1{m^2}\right) \right\}
+\frac1{12} v_{\infty}(\Delta) \\
&> \frac 14 \log \left\{ m^2 \left(1-\frac 1{10^2}\right) \right\}
+\frac1{12} v_{\infty}(\Delta) \\
&=\frac 12 \log m +\frac14 \log \frac {99}{100} +\frac1{12} v_{\infty}(\Delta). 
\end{align*}

%

Next we compute the local height for nonarchimedean places. 
Let 
\[
\psi_2=2y,\quad 
\psi_3=3\,{x}^{4}-6\,{m}^{2}\,{x}^{2}+12\,x-{m}^{4}
\]
be the division polynomials of $E$.
%
%
Put $X=x(P)$ and $Y=y(P)$.

We claim that 
\begin{align}
\label{eq:lower2m}
\lambda_2(P)
\begin{cases}
\geq \frac1{12}v_2(\Delta)\quad 
\text{if } P\ \text{reduces to a nonsingular point modulo 2,}\\
=-\frac13 \log 2 
+\frac1{12}v_2(\Delta)\quad 
\text{otherwise.}
\end{cases}
\end{align}

Indeed, 
if nonsingular it is clear and  
we assume $P$ reduces to a singular point modulo 2.  
Then by Lemma \ref{lem:red2} 
the reduction type is ${\rm IV}$ and so 
\[
\lambda_2(P)=\frac 13 \log |\psi_2(P)|_2 
+\frac1{12}v_2(\Delta)
=\frac 13 \log |2Y|_2 
+\frac1{12}v_2(\Delta).
\]
If $v_2(m)>0$, then $v_2(X)>0$ since $v_2(3X^2-m^2)>0$. 
So $v_2(Y^2)=v_2(X^3-m^2X+1)=0$. 
If $v_2(m)=0$, then $v_2(X)=0$ since $v_2(3X^2-m^2)>0$. 
So $v_2(Y^2)=v_2(X^3-m^2X+1)=0$. 
In any case $v_2(Y)=0$ and we have 
\[
\lambda_2(P)=-\frac13 \log 2 
+\frac1{12}v_2(\Delta).
\]

Similarly we claim that 
\begin{align}
\label{eq:lower3m}
\lambda_3(P) \geq 
\begin{cases}
\frac1{12}v_3(\Delta)\quad 
\text{if } P\ \text{reduces to a nonsingular point modulo 3,}\\
-\frac14 \log 3 
+\frac1{12}v_3(\Delta)\quad 
\text{otherwise.}
\end{cases}
\end{align}

Indeed, 
if nonsingular it is clear and  
we assume $P$ reduces to a singular point modulo 3.  
Then it is necessary that $v_3(Y)>0$ 
and $v_3(m)>0$ since 
$\frac{\partial}{\partial x} (x^3-m^2x+1-y^2)=3x^2-m^2$ 
and $\frac{\partial}{\partial y} (x^3-m^2x+1-y^2)=-2y$. 
Further note $v_3(X) \geq 0$ since $v_3(3X^2-m^2)>0$. 
Now the reduction type is {III} by Lemma \ref{lem:red3} 
and 
\[
\lambda_3(P)=\frac 18 \log|\psi_3(P)|_3+\frac1{12}v_3(\Delta). 
\]
By Lemma \ref{lem:id} we have the identity 
\[
16\,(3\,{X}^{2}-4\,{m}^{2})\cdot \psi_3(P) 
- 4\,(9\,{X}^{3}-21\,{m}^{2}\,X+27)\cdot \psi_2^2(P) 
=\Delta. 
\]
Note that $\ord_3 \Delta=3$ and 
\[
\ord_3((9\,{X}^{3}-21\,{m}^{2}\,X+27)\cdot \psi_2^2(P))\geq 4,
\quad \ord_3(3X^2-4m^2)\geq 1.  
\] 
This indicates $\ord_3 \psi_3(P) \leq 2$ and so 
\begin{align*}
\lambda_3(P)\geq \frac 18 \cdot (-2\log 3)+\frac1{12}v_3(\Delta)
=-\frac 14 \log 3+\frac1{12}v_3(\Delta). 
\end{align*}

For $p>3$ by Lemma \ref{lem:red-p} with the assumption 
that $27-4m^6$ has no square factor, 
the reduction type is either ${\rm I_0}$ or ${\rm I_1}$ and so 
\begin{align}
\label{eq:lowerpm}
\lambda_p(P)=\frac 12 \log \max \{1,\ |x(P)|_p\} 
+\frac1{12}v_p(\Delta)
\geq \frac1{12}v_p(\Delta). 
\end{align}

Finally we obtain 
\begin{align*}
\hat{h}(P)
=\sum_{p \leq \infty}\lambda_p(P)
>\frac 12 \log m +\frac14 \log \frac{99}{100}
-\frac13 \log 2 -\frac 14\log 3 
>\frac 12 \log m -0.509. 
\end{align*}
\end{proof}

\begin{prop}
\label{prop:upper2}
Let $P_0=(0,1)$, $P_{-1}=(-m, 1)$ 
and $P_2=(-1, m)$ 
be 
integral points on $E_{m,1}$. 
Assume that $m \geq 10$. 
%
%
Then 
for 
$P \in \{P_{0},\ P_{-1},\ P_{2},\ P_{0}+P_{-1},\ 
P_{-1}+P_{2},\ P_{0}+P_{-1}+P_{2}\}$ 
\[
\hat{h}(P)< \frac 12 \log m +0.290. 
\]
Further if we assume $p$-primary part of 
$\Delta_{m,1}$ is square-free for any $p>3$, 
then 
\begin{align*}
\log m 
-0.634 
<\hat{h}(P_{2}+P_{0})&<\log m 
+0.068.  
\end{align*}
\end{prop}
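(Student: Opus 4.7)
The plan is to follow the same template as the proof of Proposition \ref{prop:upper}, but with the modified Tate series of Lemma \ref{lem:mod-tates} in place of the original one. I would first write down explicit coordinates for each of the six points in the first assertion, together with $P_2+P_0$. The relation $P_0+P_{+1}+P_{-1}=O$ (collinearity on $y=1$) gives $P_0+P_{-1}=-P_{+1}=(m,-1)$; the chord-and-tangent law then yields $P_{-1}+P_2=(m+2,-(2m+3))$, $P_0+P_{-1}+P_2=(2-m,3-2m)$, and $P_2+P_0=(m^2-2m+2,\,m^3-3m^2+4m-3)$. All of these points have integral coordinates.

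For the archimedean local height, Lemma \ref{lem:mod-tates} applies, since each $P$ has $y(P)\ne 0$ so $P\notin E[2]$, and since $x(Q)>m-1>0$ on the identity component by Lemma \ref{lem:shift2}. It gives
\[
\lambda_\infty(P)=\frac{1}{8}\log|u(P)|+\frac{1}{8}\sum_{k=1}^{\infty}4^{-k}\log|z(2^kP)|+\frac{1}{12}v_\infty(\Delta),
\]
where $u(P)=(x(P)^2+m^2)^2-8x(P)$. For every $k\ge 1$ the point $2^kP$ lies on the identity component, so the upper bound in (\ref{eq:z-bound2-2}) applies uniformly and gives a tail contribution at most $\frac{1}{12}\log(181/81)<0.068$ for $m\ge 10$.

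The non-archimedean estimate is the observation that for an integral point $P$ one has $\lambda_p(P)\le\frac{1}{12}v_p(\Delta)$ at every finite prime: if $P$ reduces to a nonsingular point this follows from $|x(P)|_p\le 1$, and in the singular cases examined in the proof of Proposition \ref{prop:lower2} the correction term is of the form $\frac{1}{k}\log|\psi_k(P)|_p$, which is non-positive because $\psi_k(P)\in\bZ$. Summing over all places and invoking the product formula $\sum_{p\le\infty}v_p(\Delta)=0$ then yields $\hat{h}(P)\le\frac{1}{8}\log|u(P)|+\frac{1}{12}\log(181/81)$ for any integral $P$ on $E_{m,1}$.

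It then remains to verify the numerical inequalities. For the first assertion one computes $u(P_0)=m^4$, $u(P_{+1})=4m^4-8m$, $u(P_2)=m^4+2m^2+9$, $u(P_{-1}+P_2)=4m^4+16m^3+32m^2+24m$, and $u(P_0+P_{-1}+P_2)=4m^4-16m^3+32m^2-24m$; the tightest case is $P_{-1}+P_2$ at $m=10$, giving $\frac{1}{8}\log(5.944\,m^4)+0.067\approx\frac{1}{2}\log m+0.290$. For $P_2+P_0$ one computes $u=(m^4-4m^3+9m^2-8m+4)^2-8(m^2-2m+2)$, and the upper bound $\log m+0.068$ follows from $u<m^8$ (the base in parentheses is positive and strictly less than $m^4$ for $m\ge 4$). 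The lower bound $\log m-0.634$ requires combining a lower estimate $u(P_2+P_0)\ge c\,m^8$ with an explicit $c>0$ bounded away from zero for $m\ge 10$ (using $\log z(2^kP)\ge 0$ from the lower half of (\ref{eq:z-bound2-2}) for the tail) together with the non-archimedean lower bounds $\lambda_2(P)\ge -\frac{1}{3}\log 2+\frac{1}{12}v_2(\Delta)$, $\lambda_3(P)\ge -\frac{1}{4}\log 3+\frac{1}{12}v_3(\Delta)$, and $\lambda_p(P)\ge\frac{1}{12}v_p(\Delta)$ for $p>3$, as established in the proof of Proposition \ref{prop:lower2}; the square-free hypothesis on $\Delta_{m,1}$ is invoked precisely to guarantee the last bound for $p>3$. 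The main obstacle is the tightness of these numerical constants: the bound $0.290$ is essentially saturated by $P_{-1}+P_2$ at $m=10$, so the tail and all local estimates must be handled sharply enough to leave the required slack.
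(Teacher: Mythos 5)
Your proposal is correct and follows essentially the same route as the paper: the modified Tate series of Lemma \ref{lem:mod-tates} with the bounds (\ref{eq:z-bound2}) and (\ref{eq:z-bound2-2}) for the archimedean part, the trivial bound $\lambda_p(P)\le\frac{1}{12}v_p(\Delta)$ for integral points at finite places, the same explicit values of $u$ at the listed points (with $P_{-1}+P_2$ as the extremal case), and the non-archimedean lower bounds from the proof of Proposition \ref{prop:lower2} for the lower estimate on $\hat{h}(P_2+P_0)$. The numerical constants you identify match those in the paper.
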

\begin{proof}
First we have the explicit expressions 
\begin{align*}
&P_{0}+P_{-1}=-P_{+1}=(m,\,-1),\\
&P_{-1}+P_{2}=(m+2,\,-2m-3),\\ 
&P_{2}+P_{0}=(m^2-2m+2,\,m^3-3m^2+4m-3),\\
&P_{0}+P_{-1}+P_{2}=(-m+2,\,-2m+3).  
\end{align*}
So we have 
%
%
%
%
\begin{align*}
&u(P_{0})=m^4,\quad
u(P_{-1})=4m^4+8m,\quad 
u(P_{2})=m^4+2m^2+9,\\
&u(P_{0}+P_{-1})=4m^4-8m,\quad
u(P_{-1}+P_{2})=4m^4+16m^3+32m^2+24m,\\
&u(P_{0}+P_{-1}+P_{2})= 4m^4-16m^3+32m^2-24m,
\end{align*}
where 
$u(Q)=(x(Q)^2+m^2)^2-8x(Q)$ 
as defined in Lemma \ref{lem:mod-tates}. 
Therefore for 
$P \in \{P_{0},\ P_{-1},\ P_{2},\ P_{0}+P_{-1},\ 
P_{-1}+P_{2},\ P_{0}+P_{-1}+P_{2}\}$ 
we have 
\begin{align*}
u(P) 
\leq 4m^4+16m^3+32m^2+24m 
&=4m^4(1+4/m+8/m^2+6/m^3) \\
&\leq 4m^4(1+4/10+8/10^2+6/10^3)
=4m^4\cdot \frac{743}{500}
\end{align*}

On the other hand, 
\begin{align*}
u(P_{2}+P_{0})=
{m}^{8}-8\,{m}^{7}+34\,{m}^{6}-88\,{m}^{5}
+153\,{m}^{4}-176\,{m}^{3}+128\,{m}^{2}-48\,m
\leq m^8
\end{align*}
for $m \geq 10$. 

By (\ref{eq:z-bound2-2}) 
we have 
\[
z(2^k Q)<\left(1+\frac{m^2}{(m-1)^2}\right)^2
\leq \left(1+\frac{10^2}{9^2}\right)^2
=\left(\frac{181}{81}\right)^2
\]
 for any $Q$. 
So for $P \in \{P_{0},\ P_{-1},\ P_{2},\ P_{0}+P_{-1},\ 
P_{-1}+P_{2},\ P_{0}+P_{-1}+P_{2}\}$,  
\begin{align*}
\lambda_{\infty}(P) 
&=\frac 18 \log|u(P)|+\frac 1 8 \sum_{k=1}^{\infty} 4^{-k}\log |z(2^k P)|
+\frac1{12}v_{\infty}(\Delta)\\
&<\frac 18 \log 4m^4\left(\frac{743}{500}\right)
+\frac 1 8 \sum_{k=1}^{\infty} 4^{-k}
\cdot \log \left(\frac{181}{81}\right)^2
+\frac1{12}v_{\infty}(\Delta)\\
&= \frac 12 \log m 
+\frac 18\log 4 
+\frac 1 {8}\log \frac{743}{500}
+\frac 1 {12}\log \frac{181}{81}
+\frac1{12}v_{\infty}(\Delta) \\
\end{align*}
and similarly, 
\begin{align*}
\lambda_{\infty}(P_{2}+P_{0}) 
&<\frac 18 \log m^8 
+\frac 1 8 \sum_{k=1}^{\infty} 4^{-k}
\cdot \log \left(\frac{181}{81}\right)^2
+\frac1{12}v_{\infty}(\Delta)\\
&=  \log m 
+\frac 1 {12}\log \frac{181}{81}
+\frac1{12}v_{\infty}(\Delta).
\end{align*}

Now since the relevant points are integral 
we clearly have 
\[
 \lambda_{p}(P)\leq \frac1{12}v_{p}(\Delta)
\]
for 
$p<\infty$. 

By summing up them, 
for 
$P \in \{P_{0},\ P_{-1},\ P_{2},\ P_{0}+P_{-1},\ 
P_{-1}+P_{2},\ P_{0}+P_{-1}+P_{2}\}$ 
\begin{align*}
\hat{h}(P) 
&<\frac 12 \log m 
+\frac 18\log 4 
+\frac 1 {8}\log \frac{743}{500}
+\frac 1 {12}\log \frac{181}{81}\\
&<\frac 12 \log m +0.290. 
\end{align*}
and 
\begin{align*}
\hat{h}(P_{2}+P_{0}) 
&< \log m  
+\frac 1 {12}\log \frac{181}{81}\\
&<\log m +0.068. 
\end{align*}

Next we shall obtain a lower bound for $\hat{h}(P_{2}+P_{0})$. 
By (\ref{eq:z-bound2}) and (\ref{eq:z-bound2-2}) we have 
\begin{align*}
\lambda_{\infty}(P_{2}+P_{0}) 
&\geq \frac 18 \log |x(P_{2}+P_{0})^2+m^2-1|^2 
+\frac1{12} v_{\infty}(\Delta) \\
&= \frac 14 \log (m^4-4m^3+9m^2-8m+3)
+\frac1{12} v_{\infty}(\Delta) \\
&> \frac 14 \log (m^4-4m^3)
+\frac1{12} v_{\infty}(\Delta) \\
&> \frac 14 \log \left\{ m^4 \left(1-\frac 4{10}\right) \right\}
+\frac1{12} v_{\infty}(\Delta) \\
&= \log m +\frac14 \log \frac {3}{5} +\frac1{12} v_{\infty}(\Delta). 
\end{align*}

Finally, with using (\ref{eq:lower2m}), (\ref{eq:lower3m}) 
and (\ref{eq:lowerpm}), 
we obtain 
\begin{align*}
\hat{h}(P_{2}+P_{0}) 
\geq \log m +\frac14 \log \frac {3}{5} -\frac13\log 2-\frac14\log 3
>\log m-0.634. 
\end{align*}
\end{proof}

\begin{thm}
\label{thm:E_m1}
Let $P_0=(0,1)$, $P_{-1}=(-m, 1)$ 
and $P_{2}=(-1,m)$ be 
integral points on $E_{m,1}$. 
Assume that $m >3$ and that 
the $p$-primary part of 
$\Delta_{m,1} =-16(27-4m^6)$ is square-free for any $p>3$. 
Then $\{P_0,\ P_{-1},\ P_{2}\}$ can be extended to a basis for $E_{m,1}(\Q)$. 
\end{thm}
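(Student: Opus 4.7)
The plan is to follow the strategy of Theorem \ref{thm:E_1n}, upgraded to rank three. By Proposition \ref{prop:tors}, $E_{m,1}(\Q)$ is torsion-free, and by Proposition \ref{prop:2div} combined with the relation $P_0+P_{+1}+P_{-1}=O$, the points $P_0$ and $P_{-1}$ are independent modulo torsion; independence of the full triple $\{P_0, P_{-1}, P_2\}$ and finiteness of the index it cuts out in its saturation will drop out of the height bound below.

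Let $\nu$ denote this index. Applying Siksek's theorem (\cite{siksek1}) in rank three with the Hermite constant $\gamma_3 = 2^{1/3}$ yields
\[
\nu \;\leq\; \sqrt{2}\,\frac{\sqrt{R(P_0,\,P_{-1},\,P_2)}}{\lambda^{3/2}},
\]
where $\lambda$ is any positive lower bound for $\hat h$ on the non-torsion points of $E_{m,1}(\Q)$. Proposition \ref{prop:lower2} supplies $\lambda = \tfrac{1}{2}\log m - 0.509$, so it remains to bound the regulator $R(P_0, P_{-1}, P_2)$ from above.

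Expand the $3\times 3$ Gram determinant using the polarization identity $\langle P_i, P_j\rangle = \tfrac{1}{2}\bigl(\hat h(P_i+P_j)-\hat h(P_i)-\hat h(P_j)\bigr)$. Every canonical height that appears---$\hat h(P_0)$, $\hat h(P_{-1})$, $\hat h(P_2)$, $\hat h(P_0+P_{-1})$, $\hat h(P_{-1}+P_2)$, $\hat h(P_0+P_{-1}+P_2)$, together with both sides of $\hat h(P_0+P_2)$---is supplied by Propositions \ref{prop:upper2} and \ref{prop:lower2}. Because $P_2+P_0 = (m^2-2m+2,\,m^3-3m^2+4m-3)$ has coordinates of size $m^2$, its canonical height grows like $\log m$, whereas the other six heights grow like $\tfrac{1}{2}\log m$; consequently $\langle P_0, P_2\rangle$ stays $O(1)$, while $\langle P_0, P_{-1}\rangle$ and $\langle P_{-1}, P_2\rangle$ behave like $-\tfrac{1}{4}\log m$. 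Asymptotically the Gram matrix is $\tfrac{1}{2}\log m$ times the tridiagonal matrix with diagonal $(1,1,1)$ and off-diagonals $-1/2$, whose determinant equals $1/2$. Hence $R(P_0,P_{-1},P_2) \sim \tfrac{1}{16}(\log m)^3$ and $\sqrt{2R}/\lambda^{3/2} \to 1$ as $m\to\infty$, which forces $\nu = 1$ for every $m$ exceeding some effective threshold $M_0$.

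For the remaining cases $3 < m \leq M_0$, proceed as in Theorem \ref{thm:E_1n}: Proposition \ref{prop:2div} rules out $2\mid\nu$ on the $\{P_0, P_{\pm 1}\}$-part, additional 2-, 3-, and small-prime divisibility tests on $P_2$ and its $\pm$-combinations with $P_0, P_{-1}$ are carried out via Magma's \texttt{DivisionPoints}, and for the smallest $m$ one invokes \texttt{Generators} to exhibit a basis and compare regulators. The main obstacle is that the Siksek bound is essentially tight---the asymptotic ratio $\sqrt{2R}/\lambda^{3/2}$ tends to exactly $1$---so the numerical constants of Propositions \ref{prop:upper2} and \ref{prop:lower2} must be propagated through the $3\times 3$ determinant without appreciable loss, while carefully accounting for the anomalous size of $\hat h(P_2+P_0)$ relative to the remaining heights.
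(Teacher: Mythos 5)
Your overall strategy --- Siksek's index bound with $\gamma_3^{3/2}=\sqrt2$, the lower bound $\lambda=\tfrac12\log m-0.509$ from Proposition \ref{prop:lower2}, the Gram-entry estimates from Proposition \ref{prop:upper2}, and Magma for the remaining small $m$ --- is exactly the paper's. There is, however, one genuine gap: you cannot let independence of $\{P_0,P_{-1},P_2\}$ ``drop out of the height bound below.'' Siksek's inequality presupposes that the three points are independent (otherwise the index $\nu$ is undefined and $R=0$), so independence must be secured \emph{before} the index argument, not extracted from it. Your asymptotic $R\sim\tfrac1{16}(\log m)^3$ would indeed give $R>0$ for large $m$, but as written it is a heuristic in which each entry is replaced by its leading term; making it rigorous requires a positive \emph{lower} bound on the $3\times3$ determinant using two-sided bounds on all six entries, a computation you never carry out. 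The paper sidesteps this entirely: it shows that none of $P_0$, $P_{-1}$, $P_0+P_{-1}$ (Proposition \ref{prop:2div}, applicable since the hypothesis on $\Delta_{m,1}$ excludes $m=7,24$) nor $P_2$, $P_{-1}+P_2$, $P_2+P_0$, $P_0+P_{-1}+P_2$ lies in $2E_{m,1}(\Q)$ --- the latter four because $Q=2R$ would force $\hat h(Q)=4\hat h(R)\ge 4\bigl(\tfrac12\log m-0.509\bigr)$, contradicting the upper bounds $\tfrac12\log m+0.290$ and $\log m+0.068$ of Proposition \ref{prop:upper2} for $m\ge10$ --- and independence then follows from the standard $2$-divisibility criterion in the torsion-free group $E_{m,1}(\Q)$.

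A second, non-fatal point: you insist the Siksek ratio tends to exactly $1$ and that the constants must be propagated through the full determinant ``without appreciable loss.'' This is unnecessary. The paper simply discards the negative terms of the determinant expansion, bounding $R<\hat h(P_0)\hat h(P_{-1})\hat h(P_2)+2\langle P_0, P_{-1}\rangle\langle P_{-1}, P_2\rangle\langle P_2, P_0\rangle$; the resulting ratio tends to $\sqrt2$, and one only needs it below $3$ (achieved for $m\ge59$) because $2\nmid\nu$ is already known from the independence step. The numerics are therefore far more forgiving than you fear, at the modest price of running \texttt{Generators} over the range $4\le m\le 58$.
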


\begin{proof}
Assume $m \geq 10$. 
By Proposition \ref{prop:tors}, $E_{m,1}(\Q)$ is torsion-free 
and by Proposition \ref{prop:2div} 
if $m \neq 24$, 
then $\{P_{0},\ P_{+1},\ P_{0}+P_{+1}\} \not\in 2E_{m,1}(\Q)$, 
equivalently, $\{P_{0},\ P_{-1},\ P_{0}+P_{-1}\} \not\in 2E_{m,1}(\Q)$. 
Further the facts 
\begin{align*}
&2^2\left(\frac 12 \log m -0.509\right)>\frac 12 \log m+0.290,\\
&2^2\left(\frac 12 \log m -0.509\right)>\log m +0.068
\end{align*}
with Propositions \ref{prop:lower2} and \ref{prop:upper2}
imply 
$\{P_{2},\ P_{-1}+P_{2},\ 
P_{2}+P_{0},\ P_{0}+P_{-1}+P_{2}\} \not\in 2E_{m,1}(\Q)$. 
Consequently $P_0,\ P_{-1},\ P_{2}$ are independent for $m\geq 10,\ m\neq 24$. 

Let $\nu$ be the index of the span of $P_0,\ P_{-1},\ P_{2}$ 
in $\Z G_1 + \Z G_2 + \Z G_3$, where $G_1,\ G_2,\ G_3$ are points contained 
in a basis for $E_{m,1}(\Q)$
such that
$P_0,\ P_{-1},\ P_{2} \in \Z G_1 + \Z G_2 + \Z G_3$. 
We should show $\nu=1$. 
%
%
%
%
%
%
%
%
%
%
%
%
First we estimate the height paring: 
\[
2\langle P_i,\, P_{j} \rangle
=\hat{h}(P_i+P_{j})-\hat{h}(P_i)-\hat{h}(P_{j}). 
\]
By Propositions \ref{prop:lower2} and \ref{prop:upper2} 
\begin{align*}
-\frac12 \log m -1.089&<2\langle P_0,\, P_{-1} \rangle
<\frac12 \log m +1.308,\\
-\frac12 \log m -1.089&<2\langle P_{-1},\, P_{2} \rangle
<\frac12 \log m +1.308, \\
-1.214&<2\langle P_{2},\, P_{0} \rangle
<1.086. 
\end{align*}

As the proof of Theorem \ref{thm:E_1n}, 
by Siksek's theorem 
\begin{align*}
\nu 
&\leq \sqrt{2}\,\sqrt{\frac{ R(P_0, P_{-1}, P_{2})}{\lambda^3}}.
\end{align*}
Since by definition 
\begin{align*}
R(P_0, P_{-1}, P_{2}) 
&=|\det\, (\langle P_i,\, P_{j} \rangle)_{i,j=0,-1,2}|\\
&< 
\hat{h}(P_0)\,\hat{h}(P_{-1})\,\hat{h}(P_2)
+2\langle P_0,\, P_{-1} \rangle 
\langle P_{-1},\, P_{2} \rangle 
\langle P_{2},\, P_{0} \rangle,  
\end{align*}
we have 
\begin{align*}
\sqrt{2}\,\sqrt{\frac{ R(P_0, P_{-1}, P_{2})}{\lambda^3}}
< \sqrt{2}\,\sqrt{\frac{(\frac12 \log m+{0.290})^3
+2(\frac14 \log m +\frac{1.308}2)^2\times \frac{1.214}2}
{(\frac12 \log m-0.509)^3}}
\end{align*}
for $m\geq 10$. 
By calculation we see that 
the right hand side is 
less than $3$ for $m\geq 59$, 
which imply $\nu=1$ for $m\geq 59$.  
(Note that $2 \nmid \nu$ by the above argument.) 
For $m < 59$ we can check that 
$\{P_0,\ P_{-1},\ P_{2}\}$ can be extended to a basis 
by using the Magma function ``Generators'' 
by the same manner as in Theorem \ref{thm:E_1n}. 
\end{proof}

\begin{rem}
During the check 
we can find that in the cases where $m=7,\ 24$, 
$\{P_0,\ P_{-1},\ P_{2}\}$ can not  be extended to a basis 
(in fact $\nu=2$), 
but in such cases 
the assumption that the $p$-primary part of $\Delta$ 
is square-free for $p>3$ is not satisfied.  
In the cases where $m=1,\ 2,\ 3$ the rank of $E_{m,1}(\Q)$ 
is less than three. 
\end{rem}

%
%
%
%

\begin{verbatim}
------------------------------------
//Magma codes to verify 0 < R'/R < 4 for m < 59 
 
n:=1;
for m in [4..59] do 
E:=EllipticCurve([0,0,0,-m^2,n^2]);
P0:=E![0,n]; P1:=E![m,n]; P2:=E![-1,m];
G:=Generators(E);
r:=#G; G;

if r eq 3 then
Regulator(G);
Regulator([P0, P1, P2]);

else if r eq 4 then
Regulator(G);
for i in [1..r] do
Regulator([P0, P1, P2, G[i]]);
end for;

else if r eq 5 then
Regulator(G);
for i in [1..r] do 
for j in [1..i-1] do
Regulator([P0, P1, P2, G[i], G[j]]);
end for;end for;
end if;end if;end if;
end for;
------------------------------------
\end{verbatim}

In the end of the paper 
we prove Proposition \ref{prop:density}. 
Before the proof we review the outline of 
the proof of \cite[Theorem 1]{granville}. 

For any polynomial $f$ such that 
$\{f(n)\ ; n\in \Z\}$ have no common square factor, 
put 
\begin{align*}
&S_1(x)=\#\{ n \in (0, x] \ ;\ p^2 \nmid f(n) 
\text{ for any } p \leq (\log x)/3 \},\\
&S_2(x)=\#\{ n \in (0, x] \ ;\ p^2 \mid f(n) \text{ for some }
 p \in ((\log x)/3 ,x] \},\\
&S_3(x)=\#\{ n \in (0, x] \ ;\ p^2 \mid f(n) \text{ for some } p>x \}.
\end{align*}
Then 
\begin{align*}
S_1(x)\sim \prod_{p} \left(1-\frac {\omega_f (p)}{p^2} \right)\cdot x,\quad
S_2(x)=o(x),\quad
S_3(x) =o(x), 
\end{align*}
where 
\[
\omega_f (p)
=\#\{n\pmod {p^2}\ ;\ f(n) \equiv 0 \pmod {p^2}\}. 
\]
The first estimate is due to the prime number theorem 
with the fact that 
the number of integers $n \in (a,\ a+\prod_{p \leq x_0}p^2]$ 
such that $p^2 \nmid f(n)$ for any $p \leq x_0$ 
is exactly 
\[
\prod_{p \leq x_0}p^2 \prod_{p\leq x_0} 
\left(1-\frac {\omega_f (p)}{p^2} \right) 
\]
independently of $a$, 
which is essentially from the Chinese remainder theorem.  
The estimate for $S_3$ needs the $abc$ conjecture. 
Consequently \cite[Theorem 1]{granville} is proved. 

So in our setting we have only to remove the factors 
$\left(1-\frac {\omega_f (2)}{2^2} \right)$ 
and 
$\left(1-\frac {\omega_f (3)}{3^2} \right)$ 
from the first estimate 
since we allow the discriminants $\Delta_{1,n}$ and  
$\Delta_{m,1}$ 
to be divisible by the square of $2$ or $3$, 
which does not alter the estimate for $S_2$ and $S_3$.

\begin{proof}[Proof of Proposition \ref{prop:density}]
\label{prf:density}
Put $D_{m,n}=27n^4-4m^6$, so that $\Delta_{m,n}=-16D_{m,n}$. 
Further define  
\begin{align*}
&\Omega_1 (p)=\{n\pmod {p^2}\ ;\ D_{1,n} \equiv 0 \pmod {p^2}\},\\
&\Omega_2 (p)=\{m\pmod {p^2}\ ;\ D_{m,1} \equiv 0 \pmod {p^2}\}.
\end{align*}
Since the discriminants of 
$D_{1,n}$ and $D_{m,1}$ 
(as polynomials in $n$ and $m$, respectively) 
have no prime divisor other than $2$ or $3$, 
we have 
$\omega_1 (p):=\#\Omega_1 (p) \leq 4$ 
and 
$\omega_2 (p):=\#\Omega_2 (p) \leq 6$ 
for $p>3$ 
by \cite[Lemma 5.2]{murty-pasten}. 
%
%
%

In view of the argument just before the proof 
we see that 
\begin{align*}
\kappa_1
=\prod_{p>3} \left(1-\frac {\omega_1 (p)}{p^2} \right)
=\prod_{k=3}^{\infty} \left(1-\frac {\omega_1 (p_k)}{p_k^2} \right), 
\end{align*}
where $p_k$ is the $k$-th prime number. 
Now by using the inequality 
\[
\prod_{k=1}^{N} \left(1-a_k\right) \geq 1-\sum_{k=1}^{N} a_k
\]
for $0<a_k<1$, which can be seen by induction, 
we have 
\begin{align*}
\kappa_1
=\prod_{k=3}^{60} \left(1-\frac {\omega_1 (p_k)}{p_k^2} \right)
\prod_{k=61}^{\infty} \left(1-\frac {\omega_1 (p_k)}{p_k^2} \right)
&\geq \prod_{k=3}^{60} \left(1-\frac {\omega_1 (p_k)}{p_k^2} \right)
\prod_{k=61}^{\infty} \left(1-\frac {4}{p_k^2} \right) \\
&\geq \prod_{k=3}^{60} \left(1-\frac {\omega_1 (p_k)}{p_k^2} \right)
\left(1-\sum_{k=61}^\infty \frac {4}{p_k^2} \right) \\
&=0.972866\cdots \times 
 0.997939\cdots 
>0.97, 
\end{align*}
where we compute $\omega_1 (p_k)$ for $k\leq 60$ directly and 
use the known result of the prime zeta function (e.g. \cite[p. 95]{finch-mc}): 
\[
\sum_{k=1}^\infty \frac {1}{p_k^2}=0.4522474200\cdots.
\]
By the same argument 
\begin{align*}
\kappa_2
=\prod_{k=3}^{60} \left(1-\frac {\omega_2 (p_k)}{p_k^2} \right)
\prod_{k=61}^{\infty} \left(1-\frac {\omega_2 (p_k)}{p_k^2} \right)
&\geq \prod_{k=3}^{60} \left(1-\frac {\omega_2 (p_k)}{p_k^2} \right)
\prod_{k=61}^{\infty} \left(1-\frac {6}{p_k^2} \right) \\
&\geq \prod_{k=3}^{60} \left(1-\frac {\omega_2 (p_k)}{p_k^2} \right)
\left(1-\sum_{k=61}^\infty \frac {6}{p_k^2} \right) \\
&=0.976111\cdots \times 
 0.996909\cdots 
>0.97. 
\end{align*}
\end{proof}


\begin{verbatim}
--------------------------------------
//Pari/GP codes to compute the products 

/* let f in Z[x] */
omg(f,m)=
{
local(N);
N=0;
for(i=1,m,
if(Mod(subst(f,x,i),m)==Mod(0,m),
N=N+1;
);
);
N
}

prod(i=3,60, 1-omg(27*x^4-4,prime(i)^2)/prime(i)^2 )*1.

1-4*0.4522474200410654985+4*sum(i=1,60, 1/prime(i)^2 )


prod(i=3,60, 1-omg(27-4*x^6,prime(i)^2)/prime(i)^2 )*1.

1-6*0.4522474200410654985+6*sum(i=1,60, 1/prime(i)^2 )
--------------
\end{verbatim}

\bibliographystyle{jplain}
\bibliography{RefM.bib}

\end{document}